\tikzstyle{vertex}=[circle, draw, fill=black, inner sep=0pt, minimum size=4pt]
\tikzstyle{edge}=[line width=1.5pt,black!50!white]
\tikzstyle{gridp}=[inner sep=1pt,circle,fill=black!70!white]
\tikzstyle{gridl}=[black!50!white]
\tikzstyle{lnode}=[circle,white,draw=black!60!white,fill=black!60!white,inner sep=1pt]
\tikzstyle{llnode}=[circle,white,draw=black!60!white,fill=black!60!white,inner sep=2pt]
\tikzstyle{cnode}=[circle,draw=black!60!white,fill=black!60!white,inner sep=1.5pt]
\tikzstyle{redge}=[edge,red]
\tikzstyle{bedge}=[edge,blue]
\colorlet{ncol}{green!60!black}
\tikzstyle{nvertex}=[vertex, draw=ncol, fill=ncol]
\tikzstyle{edgeq}=[edge,gray!60,densely dashed]
\tikzstyle{nedge}=[edge,ncol]
\tikzstyle{oedge}=[edge,red!60!black]
\theoremstyle{plain}
\newtheorem{lemma}{Lemma}
\newtheorem{proposition}[lemma]{\textbf{Proposition}}
\newtheorem{theorem}[lemma]{\textbf{Theorem}}
\newtheorem{corollary}[lemma]{\textbf{Corollary}}
\theoremstyle{definition}
\newtheorem{definition}[lemma]{\textbf{Definition}}
\newtheorem{remark}[lemma]{\textbf{Remark}}
\newtheorem{example}[lemma]{\textbf{Example}}
\newcommand*{\da@rightarrow}{\mathchar"0\hexnumber@\symAMSa 4B }
\newcommand*{\da@leftarrow}{\mathchar"0\hexnumber@\symAMSa 4C }
\newcommand*{\xdashrightarrow}[2][]{%
  \mathrel{%
    \mathpalette{\da@xarrow{#1}{#2}{}\da@rightarrow{\,}{}}{}%
  }%
}
\newcommand{\xdashleftarrow}[2][]{%
  \mathrel{%
    \mathpalette{\da@xarrow{#1}{#2}\da@leftarrow{}{}{\,}}{}%
  }%
}
\newcommand*{\da@xarrow}[7]{%
  \sbox0{$\ifx#7\scriptstyle\scriptscriptstyle\else\scriptstyle\fi#5#1#6\m@th$}%
  \sbox2{$\ifx#7\scriptstyle\scriptscriptstyle\else\scriptstyle\fi#5#2#6\m@th$}%
  \sbox4{$#7\dabar@\m@th$}%
  \dimen@=\wd0 %
  \ifdim\wd2 >\dimen@
    \dimen@=\wd2 %
  \fi
  \count@=2 %
  \def\da@bars{\dabar@\dabar@}%
  \@whiledim\count@\wd4<\dimen@\do{%
    \advance\count@\@ne
    \expandafter\def\expandafter\da@bars\expandafter{%
      \da@bars
      \dabar@ 
    }%
  }%
  \mathrel{#3}%
  \mathrel{%
    \mathop{\da@bars}\limits
    \ifx\\#1\\%
    \else
      _{\copy0}%
    \fi
    \ifx\\#2\\%
    \else
      ^{\copy2}%
    \fi
  }%
  \mathrel{#4}%
}
\title{Five Equivalent Representations of a Phylogenetic Tree}
\author{%
Jiayue Qi
 \thanks{Johannes Kepler University Linz, Doctoral Program 
``Computational Mathematics'' (W1214). } \,%
and
Josef Schicho 
\thanks{Johannes Kepler University Linz, RISC.} 
}
\begin{document}
\maketitle

\begin{abstract}
 A phylogenetic tree is a tree with a fixed set of leaves that has no vertices of degree two.
 In this paper, we axiomatically define four other discrete structures on the set of leaves. 
 We prove that each of these structures is an equivalent representation of a phylogenetic tree.
\end{abstract}

\section*{Introduction}

This paper is concerned with the explanation and proof of the following result.

\begin{theorem}\label{thm:intro}
Let $n\ge 3$ be a natural number. The following sets are in one-to-one correspondence to each other.

\begin{enumerate}
\item The set of trees without vertices of degree two and leaves labelled from $1$ to $n$ (also known as ``phylogenetic trees'').
\item The set of set-partitions of $\{1,\dots,n\}$ into at least three subsets such that every singleton (cardinality-one subset) 
occurs in one partition, every subset occurs in at most one partition, and the set of non-singleton subsets is closed under taking the complement.
\item The set of cuts of $\{1,\dots,n\}$, i.e., set-partitions into two subsets $A$, $B$, such that for any two cuts $(A_1,B_1)$ and $(A_2,B_2)$, 
at least one (actually exactly one) of the four sets $A_1\cap A_2$, $A_1\cap B_2$, $B_1\cap A_2$, $B_1\cap B_2$ is empty.
\item The sets of pairs of disjoint two-element subsets of $\{1,\dots,n\}$ that fulfill axioms (X1), (X2), and (X3) in Section~\ref{ss:x}.
\item The set of equivalence relations of three-element subsets of $\{1,\dots,n\}$ whose equivalence classes fulfill axioms (D1) and (D2) in 
Section~\ref{ss:3}.
\end{enumerate}
\end{theorem}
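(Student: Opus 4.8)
The plan is to treat phylogenetic trees (representation~1) as the central object and, for each of the four remaining representations, to construct an explicit map out of the set of trees together with an explicit inverse, so that all five sets are tied to representation~1 by mutually inverse bijections; composing these then yields every pairwise correspondence. The unifying device is the \emph{split}: deleting an edge $e$ of a tree $T$ disconnects it into two components whose leaf sets form a cut $(A,B)$ with $A\cup B=\{1,\dots,n\}$ and $A\cap B=\emptyset$. I expect the genuinely hard correspondence to be $1\leftrightarrow 3$, with the remaining three being re-encodings of the same split data.

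First I would handle trees versus cuts ($1\leftrightarrow 3$). Define $\sigma(T)$ to be the set of splits arising from the edges of $T$ and verify the compatibility condition: given splits from distinct edges $e_1,e_2$, the edge $e_2$ lies entirely in one component of $T-e_1$, so one block of $(A_1,B_1)$ is contained in a block of $(A_2,B_2)$ and the complementary intersection is empty. The reverse map is the real content: given a system of pairwise-compatible cuts, reconstruct a tree. I would build it by organizing the blocks of the system into a laminar family under inclusion and realizing that family as the tree whose internal edges are exactly the prescribed non-trivial splits; one then checks that there are no degree-two vertices (each split is genuinely used, so every internal vertex is a real branch point) and that the leaf labels are recovered. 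Showing that $\sigma$ and this reconstruction are mutually inverse, and in particular that the reconstructed tree is unique, is the main obstacle; this is a Buneman/Splits-Equivalence-type argument and carries the weight of the whole theorem.

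With $1\leftrightarrow 3$ in hand, I would obtain $3\leftrightarrow 2$ by regrouping. At each internal vertex $v$, the splits on the edges incident to $v$ furnish a partition of $\{1,\dots,n\}$ whose blocks are the leaf sets seen through each incident edge; collecting these vertex-partitions over all internal vertices gives the object of representation~2. Here the ``at least three subsets'' clause is exactly the no-degree-two condition, each singleton $\{i\}$ occurs once because leaf $i$ attaches to a unique internal vertex, and closure of the non-singleton blocks under complement holds because a block seen at $v$ through $e=\{u,v\}$ is the complement of the block seen at $u$ through the same edge. Inverting is immediate: the complementary pairs of non-singleton blocks (and the singletons) recover the splits, so $2\to 3$ merely forgets the vertex grouping.

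Finally I would treat quartets ($1\leftrightarrow 4$) and triple-equivalences ($1\leftrightarrow 5$) through the same splits. For representation~4, declare the pair of disjoint pairs $\{a,b\},\{c,d\}$ admissible for $T$ precisely when some split separates $\{a,b\}$ from $\{c,d\}$, i.e.\ the induced four-leaf subtree is $ab|cd$; checking axioms (X1)--(X3) amounts to translating compatibility of splits into statements about four- and five-element leaf subsets, while reconstructing $T$ means recovering each split as a union of pairs consistently grouped by the quartets. For representation~5, send each three-element subset $\{a,b,c\}$ to its median vertex in $T$ and call two triples equivalent when they share a median; since every internal vertex is the median of some triple, the equivalence classes are in bijection with the internal vertices, hence with the vertex-partitions of representation~2, and axioms (D1)--(D2) should be exactly the conditions making an abstract such relation realizable by medians of a tree. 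Each of these last two directions again hinges on the reconstruction established for splits, so once $1\leftrightarrow 3$ is secured I expect $4$ and $5$ to follow by careful but routine translation between the local (quartet, median) data and the global split data.
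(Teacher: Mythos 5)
Your overall architecture is sound, and your treatment of the two ``macroscopic'' representations essentially coincides with the paper's: the tree-to-cuts map via splits of internal edges, the reconstruction of the tree from a pairwise-compatible cut system by realizing the laminar family of clusters (the paper does exactly this with the Hasse diagrams in Definition~\ref{def:construction_cuts_tree} and Lemma~\ref{lem:cut_to_tree_phylogenetic}), and the identification of representation~2 with the vertex partitions. The organizational difference --- you tie everything to trees in a star, the paper chains ${\cal E}_N\leftrightarrow{\cal P}_N\leftrightarrow{\cal T}_N\leftrightarrow{\cal C}_N\leftrightarrow{\cal X}_N$ --- is immaterial.

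The genuine gaps are in representations 4 and 5, precisely where you declare the remaining work ``careful but routine translation.'' For the crossing relations, the crux is to show that axioms (X1)--(X3) alone force every cross $(i,j\mid k,l)$ to extend to a full cut $(A\mid B)$ of $N$ all of whose induced quartets lie in $X$; without this there is no candidate set of cuts to feed into your split reconstruction, and ``recovering each split as a union of pairs consistently grouped by the quartets'' names the goal rather than an argument. The paper's Lemma~\ref{lem:partial} supplies this by growing a compatible partial cut one element at a time, with a nontrivial case analysis driven by (X2) and (X3); some such argument is indispensable and your sketch does not contain it. Similarly for the triple equivalences: the hard direction is to manufacture, from an abstract equivalence class $S$ satisfying (D1)--(D2), the partition (equivalently, the internal vertex) it is supposed to correspond to. The paper builds the graph $G_S$ on $N$ with edges between elements co-occurring in no triple of $S$, uses (D1) to show it is a disjoint union of cliques (Lemma~\ref{lem:complete_graph}) so that its components define a partition $p_S$, uses (D2) to show $S_{p_S}=S$, and then still needs Theorem~\ref{thm:pisp} to conclude that a collection of partitions separating every triple uniquely satisfies (P2)--(P4). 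Your statement that (D1)--(D2) ``should be exactly the conditions making an abstract such relation realizable by medians'' is the assertion to be proved, and no mechanism for extracting the median structure from the axioms is given.
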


\begin{figure}
\begin{center}
\begin{tikzpicture}[scale=1.2]
  \node[lnode] (1) at (-1.5, -1.) {1};
  \node[lnode] (2) at (-1.5, 1.) {2};
  \node[lnode] (3) at (0., 1.) {3};
  \node[lnode] (4) at (1.5, 1.) {4};
  \node[lnode] (5) at (1.5, -1.) {5};
  \node[cnode] (a) at (-1., 0.) {};
  \node[cnode] (b) at (0., 0.) {};
  \node[cnode] (c) at (1., 0.) {};
  \draw[edge,black] (a)edge(b) (b)edge(c);
  \draw[edge,black] (1)edge(a) (2)edge(a) (3)edge(b) (4)edge(c) (5)edge(c);
  \draw[blue, thick] (-0.7,-1.4) -- (-0.7,1.9);
  \node[blue] at (-0.95,1.7) {$12$};
  \node[blue] at (-0.40,1.7) {$345$};
  \draw[blue, thick] (0.7,-1.4) -- (0.7,1.7);
  \node[blue] at (0.4,1.5) {$123$};
  \node[blue] at (0.95,1.5) {$45$};
\end{tikzpicture}
\hspace{1cm}
%
\begin{tikzpicture}[scale=1.5]
  \node[lnode] (1) at (-0.5, -1) {1};
  \node[lnode] (2) at (-0.5, 1) {2};
  \node[lnode] (3) at (1, 1) {3};
  \node[lnode] (4) at (2.5, 1) {4};
  \node[lnode] (5) at (2.5, -1) {5};
  \node[cnode] (a) at (0, 0) {};
  \node[cnode] (b) at (1, 0) {};
  \node[cnode] (c) at (2, 0) {};
  \draw[edge,black] (a)edge(b) (b)edge(c);
  \draw[edge,black] (1)edge(a) (2)edge(a) (3)edge(b) (4)edge(c) (5)edge(c);
  
  \draw[red, thick] (1,0) circle (.45);
  \node[blue, thick] at (0.9,0.3) {3};
  \node[blue, thick] at (0.7,-0.123) {12};
  \node[blue, thick] at (1.3,-0.123) {45};
  
  \draw[red, thick] (0,0) circle (.45);
  \node[blue, thick] at (-0.01,0.222) {2};
  \node[blue, thick] at (-0.2,-0.2) {1};
  \node[blue, thick] at (0.211,-0.123) {345};
  
   \draw[red, thick] (2,0) circle (.45);
  \node[blue, thick] at (2.01,0.222) {4};
  \node[blue, thick] at (2.2,-0.2) {5};
  \node[blue, thick] at (1.789,-0.123) {123};
\end{tikzpicture}
\end{center}
\caption{Starting from a tree without vertices of degree three, we can obtain a set of cuts 
(i.e. set-partitions into two subsets) of its set of leaves (left subfigure), and a set of set-partitions into at least 
three subsets (right subfigure).}
\label{fig:5tree}
\end{figure}
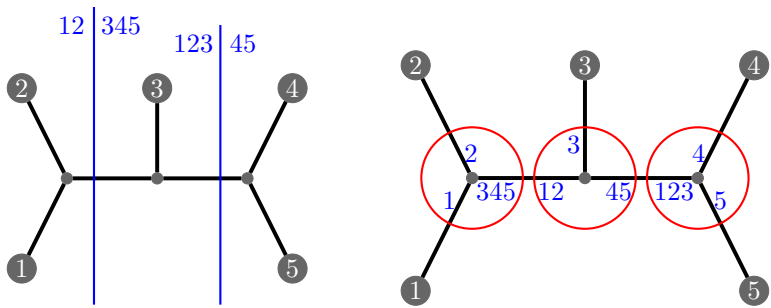

Figure~\ref{fig:5tree} shows the construction of a set of cuts and the construction of a set of set-partitions into at least three subsets, for a fixed phylogenetic tree. To obtain a set of pairs of disjoint two-element subsets, take a set of cuts, 
and pick, for each pair, a two element subset on the left and a two-element subset on the right of one of the cuts. To construct a set of equivalence relations
of three-element subsets, we recall the following property of trees: for any three distinct leaves, there is a unique vertex such that the three paths from this vertex to the three leaves are edge-disjoint (see Lemma~\ref{lem:unique_vertex}).
There is a natural way to use this fact in order to define an equivalence relation on three-element subsets: two three-element subsets 
are equivalent if and only if the unique vertex is the same for the two triples.
For larger $n$, it is not so clear that all these constructions are in bijection, i.e., 
the phylogenetic tree can be uniquely recovered from its set of cuts, or from any of the other three structures. 
We will prove it in full detail. The proofs are combinatoric, but not all trivial.

We offer an informal taxonomy of the four equivalent representations of phylogenetic trees. 
The collections of partitions and the sets of cuts are 
{\em macroscopic} pictures, in the sense that they are composed of elements of bigger scales.
In contrast, the equivalence relations
on three-element-subsets and the crossing relations are composed of smaller-scale elements; 
they are {\em microscopic} pictures: 
the crossing relations are just quaternary relations
on the set of leaves, and the equivalence relation on three-element subsets can be considered as 6-ary relations on the set of leaves.
The partitions and the equivalence relations focus on the vertices of the phylogenetic tree, in particular the non-leaves, while the cuts and the
crossing relations focus on its edges.

The relevance of these four characterizations of phylogenetic trees lies in their applications 
to the construction of the Knudsen-Mumford moduli space of $n$-marked curves of genus zero. 
In fact, they allow us to give a very explicit and even elementary construction of this moduli 
space (in a joint research in progress together with H. Hauser), in contrast to the high-flown machinery 
in the original papers \cite{KnudsenMumford1976,Knudsen1983} and to simpler, but still quite algebraic 
constructions in \cite{Keel1992,Kapranov:93,Keel_Tevelev:09,Monin_Rana:17}. Connections between
moduli space and phylogenetic trees have also been observed in \cite{GibneyMaclagan2010,Luca}, and
the cuts induced by a phylogenetic tree also play a role  in 
\cite{SturmfelsSullivant08,GibneyMaclagan2010}.

This paper has two sections. In the first section, we give the definitions of these five representations and some constructions 
from one to another. The second section contains the more involved constructions and the proof that 
all constructions are in bijection.

\section{Structures and Axioms} \label{sec:axioms}


In this section, we axiomatically define five discrete structures on a fixed finite set $N$ of cardinality at least 3:
phylogenetic trees, collections of partitions, sets of cuts, crossing relations, and equivalence relations of triples.
We will also introduce some functions converting one of these structures to another,
and use them to construct examples. Other (more involved) conversion functions will be introduced in Section~\ref{sec:conversions}.

\subsection{Trees}

Recall that an unrooted tree is an undirected graph that is connected and has no cycles.

\begin{definition}
A {\em phylogenetic tree} with leaf set $N$ is an unrooted tree $(V,E)$ without vertex of degree~2
such that $N\subset V$ is the set of leaves. We say that two phylogenetic trees $(V_1,E_1)$ and
$(V_2,E_2)$ with leaf set $V$ are isomorphic if and only if there is a graph isomorphism
that restricts to the identity on the subset of leaves.

The set ${\cal T}_N$ is defined as the set of all isomorphism classes of phylogenetic trees
with leaf set $N$.
\end{definition}

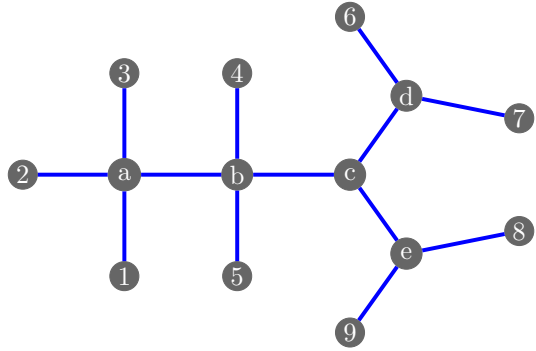
\begin{figure}
        \begin{center}
                \begin{tikzpicture}[scale=1.5]
                        \node[lnode] (1) at (1., -0.9) {1};
                        \node[lnode] (2) at (0.1, 0.) {2};
                        \node[lnode] (3) at (1., 0.9) {3};
                        \node[lnode] (4) at (2., 0.9) {4};
                        \node[lnode] (5) at (2., -0.9) {5};
                        \node[lnode] (6) at (3., 1.4) {6};
                        \node[lnode] (7) at (4.5, 0.5) {7};
                        \node[lnode] (8) at (4.5, -0.5) {8};
                        \node[lnode] (9) at (3., -1.4) {9};
                        \node[llnode] (A) at (1., 0.) {a};
                        \node[lnode] (B) at (2., 0.) {b};
                        \node[llnode] (C) at (3., 0.) {c};
                        \node[lnode] (D) at (3.5, 0.7) {d};
                        \node[llnode] (E) at (3.5, -0.7) {e};
                        \draw[bedge] (1)edge(A) (2)edge(A) (3)edge(A) (4)edge(B) (5)edge(B) (6)edge(D) (7)edge(D) (8)edge(E) (9)edge(E);
                        \draw[bedge] (A)edge(B) (B)edge(C) (C)edge(D) (C)edge(E);
                \end{tikzpicture}
        \end{center}
\caption{This is a phylogenetic tree with leaf set $N=\{1,\dots,9\}$. The set of internal nodes is $V\setminus N=\{a,b,c,d,e\}$. }
\label{fig:tree}
\end{figure}

An example of a phylogenetic tree with leaf set $N=\{1,\dots,9\}$ can be seen in Figure~\ref{fig:tree}.

\subsection{Sets of Partitions}

Recall that a partition of $N$ is a set of disjoint and non-empty subset of $N$ such that their union
is $N$.

\begin{definition}
A collection/set of partitions of $N$ is {\em phylogenetic} if it fulfills the following axioms:
\begin{description}
\item[(P1)] Every partition has at least 3 subsets; we also call these subsets the {\em parts}.
\item[(P2)] Every one-element subset of $N$ is a part of some partition.
\item[(P3)] Every subset of $N$ is a part of at most one partition.
\item[(P4)] For every part $A\subset N$ of cardinality bigger than one, the complement $N\setminus A$
  is also a part (necessarily of a different partition, by Axiom~(P1)).
\end{description}
We denote as ${\cal P}_N$ the set of all phylogenetic collections of partitions of $N$.
The set $\overline{{\cal P}_N}$ is the set of all sets of collections of partitions of $N$.
\end{definition}

\begin{example}
\label{ex:parts}
For $N=\{1,\dots,9\}$, let $P=\{ p_a,p_b,p_c,p_d,p_e\}$ be the collection of the partitions
\[ p_a = \{ \{1\}, \{2\}, \{3\}, \{4,5,6,7,8,9\} \} ,\]
\[ p_b = \{ \{1,2,3\}, \{4\},  \{5\}, \{6,7,8,9\} \} , \]
\[ p_c = \{ \{1,2,3,4,5\}, \{6,7\}, \{8,9\} \} ,\]
\[ p_d = \{ \{1,2,3,4,5,8,9\}, \{6\}, \{7\} \} , \]
\[ p_e = \{ \{1,2,3,4,5,6,7\}, \{8\}, \{9\} \} . \]
It can be checked that the axioms (P1),(P2),(P3),(P4) are fulfilled. Therefore the collection $P$ is phylogenetic.
\end{example}

The above example can be constructed from the tree in Figure~\ref{fig:tree} in a systematic way, which is described in
the following definition.

\begin{definition}
For every phylogenetic tree $T=(V,E)$ with leaf set $N$, we define a collection of partitions
which is in bijection with the set $V\setminus N$ of non-leaves, as follows.

For each non-leaf vertex 
$v$, and for each edge $e$ incident to~$v$, 
we have a subset of leaves containing the leaves $i$ such that the unique path from $v$ to $i$ begins with $e$.
For each non-leaf vertex $v$, these subsets are the parts of a partition of $N$.
The collection of partitions of $N$ is denoted by $P_T$.
The function ${\cal T}_N\to \overline{{\cal P}_N}$ mapping the class of $T$ to $P_T$ is denoted by $t_{TP}$ and we call it
the transformation from trees to partition collections.
\end{definition}

\begin{proposition}
For every phylogenetic tree $T=(V,E)$ with leaf set $N$, the collection $P_T$ is phylogenetic. 
\end{proposition}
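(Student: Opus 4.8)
The plan is to reinterpret each part of $P_T$ in terms of the edge-cuts of the tree and then to verify the four axioms one at a time. The first observation I would record is that, for a non-leaf vertex $v$ and an incident edge $e=(v,w)$, deleting $e$ splits $T$ into two subtrees, and the part of $p_v$ associated to $e$ is exactly the set of leaves lying in the subtree containing $w$, which I will call the \emph{$w$-side} of $e$. In this language every part of $P_T$ corresponds to a directed edge, an ordered pair $(v,w)$ with $\{v,w\}\in E$ and $v$ a non-leaf, and the two leaf-sides of a single edge are complementary. I would also note that each $p_v$ genuinely is a partition: since $v\notin N$, every leaf is joined to $v$ by a nonempty path starting with a unique incident edge, so the parts are disjoint and cover $N$, and each is nonempty because moving away from $v$ inside any subtree eventually reaches a degree-one vertex, i.e.\ a leaf.

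Axioms (P1), (P2) and (P4) then follow quickly. For (P1), the partition $p_v$ has exactly $\deg(v)$ parts, since distinct incident edges give disjoint nonempty leaf-sides and hence distinct parts; as $v$ is a non-leaf its degree is at least $2$, and as $T$ has no vertex of degree $2$ its degree is at least $3$. For (P2), a leaf $i$ has a unique neighbour $v$, which cannot itself be a leaf (otherwise $\{i,v\}$ would be all of $T$ and $n\le 2$), so $p_v\in P_T$, and the part of $p_v$ for the edge $(v,i)$ is the $i$-side, namely the singleton $\{i\}$. For (P4), if $A$ is a part of $p_v$ with $|A|\ge 2$ coming from $e=(v,w)$, then $w$ must be a non-leaf (a leaf $w$ would force $A=\{w\}$), so $p_w\in P_T$; the $v$-side of $e$ is the part of $p_w$ associated to $e$ and equals $N\setminus A$.

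The substantial point, which I expect to be the main obstacle, is (P3). I would deduce it from the key lemma that in a tree with no vertex of degree $2$ distinct edges induce distinct partitions of the leaves into two sides. To prove the lemma I would take two distinct edges $e_1,e_2$, delete both, and use the fact that deleting two edges from a tree leaves exactly three components, necessarily arranged in a path $C_a - e_1 - C_b - e_2 - C_c$. Comparing the two induced leaf-splits, equality forces the middle component $C_b$ to contain no leaf of $T$; but then every vertex of $C_b$ is a non-leaf, and a short case analysis on the shape of $C_b$ (according to whether $e_1$ and $e_2$ meet $C_b$ in one vertex or in two, giving a single vertex, an isolated edge, or a path with interior vertices) produces in every case a vertex of $T$-degree exactly $2$, contradicting the hypothesis.

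Granting the lemma, (P3) is immediate: if a set $A$ were a part of two partitions $p_v$ and $p_{v'}$, it would be one leaf-side of some edge $e$ and one leaf-side of some edge $e'$; the lemma gives $e=e'$, and since the two sides of an edge are complementary and nonempty, exactly one of them equals $A$, forcing $v=v'$ and making the two occurrences coincide. The single recurring ingredient throughout is the no-degree-two hypothesis, which is exactly what makes the edge-to-split correspondence injective and thus drives both (P1) and (P3).
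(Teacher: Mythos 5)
Your proof is correct, and for the one substantive axiom, (P3), it takes a genuinely different route from the paper. The paper fixes the two vertices $a,a'$ whose partitions both allegedly contain the part $A$, considers the path $p_{a,a'}$, and splits into three cases according to whether each of the two corresponding edges lies on that path; two cases die by a containment contradiction, and the third invokes the no-degree-two hypothesis to find a branch edge off the path leading to a leaf that lands on different sides of the two splits. You instead isolate a clean, reusable lemma --- in a tree without degree-two vertices, distinct edges induce distinct leaf-bipartitions --- and prove it by deleting both edges at once and inspecting the three resulting components $C_a,C_b,C_c$: equality of the splits forces the middle component to be leafless, and a leafless middle component must contain a vertex of degree two. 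This absorbs all three of the paper's cases into one argument (your $C_b$ is exactly the region the paper probes with its branch edge $e''$), and the lemma you extract is precisely the injectivity statement that the paper needs again later (for Axiom~(C) of $C_T$ and for the injectivity of the map $f$ in its Proposition on $G_{C_T,c}$), where it in fact reuses your same delete-two-edges, three-components decomposition. Two small points you should make explicit in a written version: ruling out the "crossed" identification of the two splits requires observing that the two \emph{outer} components always contain a leaf of $T$ (you record this fact earlier when showing the parts are nonempty, but do not invoke it here), and the claim that a leafless $C_b$ must be a single vertex or a path needs the one-line justification that every $C_b$-leaf must be an attachment point of $e_1$ or $e_2$, of which there are at most two. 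Your treatment of (P1), (P2), and (P4) matches the paper's.
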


\begin{proof}
Every non-leaf has at least 3 edges, hence (P1) holds.
Every leaf has a unique neighbor
which must be a non-leaf, otherwise the tree would only have two vertices (which violates our assumption on the cardinality of $N$); this implies (P2). 
Uniqueness of the neighbor
also implies (P3) for the special case of cardinality-one parts. 

Let $A$ be a part of $P_T$ such that $2\le |A|\le |N|-2$.  Then there are non-leaves $a$, $b$
and an edge $e=\{a,b\}\in E$ such that $A$ is equal to the set of leaves $i$ such that the unique path from $a$ to $i$ contains $e$.
And then $N\setminus A$ is the set of leaves $j$ such that the unique path from $b$ to $j$ contains $e$, hence $N\setminus A$
is part of the partition corresponding to the non-leaf $b$. This shows (P4).

It remains to prove Axiom (P3). Suppose that $A$ belongs to two distinct partitions of $P_T$,
corresponding to non-leafs $a$ and $a'$ respectively. Let $e=\{a,b\}$ and $e'=\{a',b'\}$ be
the two edges corresponding to $A$ in the partitions corresponding to $a$ and $a'$ respectively.
There is a unique path $p_{a,a'}$ between $a$ and $a'$. 
In the sequel, we do case distinctions on whether $e$ and $e'$ belongs to path $p_{a,a'}$ or not.
\begin{enumerate}
 \item Neither $e$ nor $e'$ belongs to $p_{a,a'}$. In this case, we remove the edge $e$, and we obtain two components 
 $T_b$ and $T_a$, where $T_b$ contains the elements
of $A$ and $T_a$ contains the elements of $N\setminus A$. Now we remove $e'$ in $T_a$, and we further obtain 
two components $T_{b'}$ and $T_{a'}$, where $T_{b'}$ contains the elements from $A$. However,
$T_{b'}$ and $T_b$ are distinct components, they cannot both contain the elements from $A$. This is a contradiction.
\item Both $e$ and $e'$ belong to $p_{a,a'}$. In this case, we can argue analogously, by interchanging the roles
of $a$ with $b$, and the roles of $a'$ with $b'$. We remove edge $e$, obtaining components $T_a$ which contains
elements in $N\setminus A$, and $T_b$ which contains elements in $A$. Similarly, there must be a unique path from
$b$ to $b'$ connecting edge $e$ and $e'$. Hence, $e'$ is in $T_b$. Now, in component $T_b$, we remove the edge $e'$, 
obtaining components $T_{a'}$ and $T_{b'}$, where $T_{b'}$ is the component containing elements in $N\setminus A$.
However, $T_{b'}$ and $T_a$ are distinct components, hence cannot both contain elements in $N\setminus A$. This 
is a contradiction.
\item Only one of $e$ and $e'$ belongs to $p_{a,a'}$. Without loss of generality, assume that $e'$ belongs to path $p_{a,a'}$ and $e$ does not.
We remove edge $e$ from the tree $T$, obtaining $T_b$ and $T_a$, where the set of leaves in $T_b$ intersected with $N$
is $A$. We remove edge $e'$ from the tree $T$, obtaining $T_{b'}$ and $T_{a'}$, 
where the set of leaves in $T_{b'}$ intersected with $N$ is $A$. Because $T$ is phylogenetic, there
is an edge $e''$ not contained in the path $p_{a,a'}$, but incident with some vertex on this path. 
Following this edge, we eventually
arrive at some leaf $l$. Then we have that $l$ is in $T_{b'}\cap N$ but not in 
$T_b\cap N$. This is a contradiction.
\end{enumerate}

Hence, Axiom (P3) holds.
\end{proof}

\subsection{Sets of Cuts}

In this structure, we are particularly interested in partitions that violate (P1).

\begin{definition}
A {\em cut} of $N$ is a partition of $N$ into two subsets $A,B$ of cardinality larger than one. 
The subsets $A$, $B$ are called {\em clusters}. We denote such a cut as $(A\mid B)=(B\mid A)$, and omit the
curly brackets when $A$ and $B$ are given by the enumeration of elements.
Axiom for a set $C$ of cuts of $N$ to be {\itshape phylogenetic} is as follows. And we denote as $cl(C)$ 
the set of all clusters of $C$.
\begin{description}
\item[(C)] For any  two cuts $(A_1\mid B_1)$, $(A_2\mid B_2)$ in $C$, at least one of the following four
  sets is empty: $A_1\cap A_2$, $A_1\cap B_2$, $B_1\cap A_2$, $B_1\cap B_2$.
\end{description}
Denote as ${\cal C}_N$ the set of all phylogenetic sets of cuts of $N$.
The set $\overline{{\cal C}_N}$ is the set of all sets of cuts of $N$.
\end{definition}
\begin{remark}
 One can check that actually we can omit ``at least'' in the above statement, since when it holds, it cannot happen that two 
 of those four sets are both empty.
\end{remark}

\begin{example}
\label{ex:cuts}
For $N=\{1,\dots,9\}$, let $C=\{ c_x,c_y,c_z,c_w\}$ be the set of cuts
\[ c_x = (1,2,3\mid 4,5,6,7,8,9), \]
\[ c_y = (1,2,3,4,5\mid 6,7,8,9), \]
\[ c_z = (1,2,3,4,5,8,9\mid 6,7), \]
\[ c_w = (1,2,3,4,5,6,7\mid 8,9). \]
It can be checked that axiom (C) is fulfilled, hence $C$ is phylogenetic.

Note that the clusters in these cuts are exactly the parts that appear in some partition 
in Example~\ref{ex:parts} that have cardinality bigger than one. Every cut corresponds to an internal edge of 
the phylogenetic tree, i.e., an edge connecting two non-leafs.
The clusters are just the set of leaves of the two connected components which arise when the corresponding edge is removed.
\end{example}

\begin{definition}
Let $T$ be a phylogenetic tree with leaf set $N$. Then $C_T$ is the set of cuts of $N$ that corresponds to 
internal edges of $T$,
with clusters being the two sets of leaves of the two components that arise when the corresponding edge is removed.
The function ${\cal T}_N\to \overline{{\cal C}_N}$, $[T]\mapsto C_T$ is denoted by $t_{TC}$ and we call it
the transformation from trees to sets of cuts.

Let $P$ be a phylogenetic collection of partitions. The set of cuts whose clusters are exactly the parts of cardinality 
at least~2 is denoted by $C_P$.
The function ${\cal P}_N\to \overline{{\cal C}_N}$, $P\mapsto C_P$ is denoted by $t_{PC}$ and we call it
the transformation from partition collections to sets of cuts.
\end{definition}

\begin{proposition}
For every tree $T$, we have $C_{P_T}=C_T$; in other words, we have $t_{PC}\circ t_{TP}=t_{TC}$.
\end{proposition}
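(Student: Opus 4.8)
The plan is to prove the equality of the two sets of cuts $C_{P_T}$ and $C_T$ directly, by showing that both consist of exactly the cuts arising from the internal edges of $T$. Since a cut is determined by its unordered pair of clusters, and both constructions produce cuts whose clusters partition $N$, it suffices to match up the clusters.

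First I would pin down which parts of the collection $P_T$ have cardinality at least two. Recall that each part arises from a non-leaf vertex $v$ together with an incident edge $e=\{v,w\}$, and equals the set of leaves $i$ whose unique path from $v$ begins with $e$; equivalently, deleting $e$ splits $T$ into two components and the part is the leaf-set of the component containing $w$. I claim this part is a singleton precisely when $w$ is a leaf (in which case it is $\{w\}$) and has cardinality at least two precisely when $w$ is a non-leaf. The forward implication is immediate; for the converse, if $w$ is a non-leaf then it has degree at least three, so after deleting $e$ the vertex $w$ still meets at least two edges, each leading to at least one leaf, producing at least two leaves in its component.

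With this characterization in hand, the two inclusions are short. For $C_T\subseteq C_{P_T}$: an internal edge $e=\{a,b\}$ has both endpoints non-leaves, and deleting it yields components with leaf-sets $A$ (containing $a$) and $B$ (containing $b$). By the definition of $P_T$, $B$ is the part coming from $(a,e)$ and $A$ is the part coming from $(b,e)$, and by the previous paragraph both have cardinality at least two, so $(A\mid B)$ is a cut of $C_{P_T}$. For $C_{P_T}\subseteq C_T$: a cut of $C_{P_T}$ has both clusters equal to parts of cardinality at least two, so writing one cluster as the part $A$ attached to $(v,e)$ with $e=\{v,w\}$, the bound $|A|\ge 2$ forces $w$ to be a non-leaf, while $v$ is a non-leaf by construction; hence $e$ is internal, its complementary part $N\setminus A$ is the leaf-set of the other component (again of cardinality at least two, since $v$ is a non-leaf), and $(A\mid N\setminus A)$ is exactly the cut that $C_T$ assigns to $e$.

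The only point requiring care — the expected obstacle — is the bookkeeping of orientations: one must keep straight that the part attached to $(v,e)$ is the leaf-set on the far side of $e$ (the $w$-side, not the $v$-side), and verify that the complement of a cardinality-at-least-two part is again of cardinality at least two, so that it genuinely forms a cut rather than pairing with a singleton. Both facts follow from the observation that the distinguished vertex $v$ of each part is always a non-leaf of degree at least three. Once these are checked, the set equality $C_{P_T}=C_T$, and therefore $t_{PC}\circ t_{TP}=t_{TC}$, follows at once.
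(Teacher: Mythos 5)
Your argument is correct; the paper itself dismisses this proposition with the single word ``Straightforward,'' and your proof is exactly the direct verification that word is gesturing at: identifying the parts of $P_T$ of cardinality at least two with the leaf-sets on the far side of internal edges, and checking both inclusions. The point you rightly flag as the only delicate one --- that the part attached to $(v,e)$ with $e=\{v,w\}$ has cardinality at least two precisely when $w$ is a non-leaf, because a non-leaf has degree at least three and each remaining branch contains a leaf --- is handled correctly.
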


\begin{proof}
Straightforward.
\end{proof}

\subsection{Crossing Relations} \label{ss:x}

\begin{definition} \label{def:x}
A {\em crossing relation} is a set $X$ of unordered pairs of disjoint cardinality-two subsets of $N$. We write
its element as $(i,j\mid k,l)$ - such that if $(i,j\mid k,l)\in X$, then the information that
$i,j,k,l$ are pairwise distinct is contained. And we call it a {\itshape cross (of $X$)},
since we can interchange $i,j$ or $k,l$, or two sets $\{i,j\}$ with $\{k,l\}$ without 
changing the element. 

Axioms for a crossing relation $X$ to be {\itshape phylogenetic} are as follows.
\begin{description}
\item[(X1)] If $(i,j\mid k,l)$, then not $(i,k\mid j,l)$, i.e., $(i,k\mid j,l)\notin X$.
\item[(X2)] If $(i,j\mid k,l)$ and $(i,j\mid k,m)$ and $l\ne m$, then $(i,j\mid l,m)$.
\item[(X3)] If $(i,j\mid k,l)$ and $m$ is distinct from $i,j,k,l$, then $(i,j\mid k,m)$ or $(i,m\mid k,l)$. Note that this ``or'' here means at least
one should hold and it may happen that both hold.
\end{description}
Denote as ${\cal X}_N$ the set of all phylogenetic crossing relations on $N$.
Denote as $\overline{{\cal X}_N}$ the set of all crossing relations of $N$.
\end{definition}

\begin{example} 
\label{ex:cross}
Let $N:=\{1,\dots,9\}$. We define a crossing relation as follows.
For any $i,j,k,l$ that are pairwise distinct, the relation $(i,j\mid k,l)$ holds if and only
if one of the following statements is true:
\begin{itemize}
  \item $i,j\in \{1,2,3\}$ and $k,l\in \{4,5,6,7,8,9\}$ ($45$ crosses);
  \item $i,j\in \{1,2,3,4,5\}$ and $k,l\in \{6,7,8,9\}$ ($60$ crosses);
  \item $i,j\in \{1,2,3,4,5,8,9\}$ and $\{k,l\}=\{6,7\}$ ($21$ crosses);
  \item $i,j\in \{1,2,3,4,5,6,7\}$ and $\{k,l\}=\{8,9\}$ ($21$ crosses).
\end{itemize}
(Its relation with Example~\ref{ex:cuts} is apparent.) In total, this crossing relation
consists of $108$ crosses. We will see later that this crossing
relation is phylogenetic.
\end{example}

The following definition is a generalization of the construction in Example \ref{ex:cross}.

\begin{definition}
For every set $C$ of cuts, we define a crossing relation $X_C$ as follows:
$(i,j,k,l)$ is in $X_C$ if and only if $C$ contains a cut $(A\mid B)$ such that $i,j\in A$ and $k,l\in B$.
The function ${\cal C}_N\to \overline{{\cal X}_N}$, $C\mapsto X_C$ is denoted by $t_{CX}$, we call it
the transformation from sets of cuts to crossing relations.
\end{definition}

\begin{remark}
The moduli space of $n$-pointed stable curves of genus zero has a natural decomposition into strata that
correspond to phylogenetic trees with leaf set of cardinality $n$. 

For any such stratum $T$, the crossing
relation is then exactly the set of $(i,j\mid k,l)$ such that the cross ratio of the four marked points
$p_i,p_j,p_k,p_l$ has value~1. As we will see, it is possible to transform a phylogenetic crossing relation
to a phylogenetic tree. In the context of moduli spaces, this is equivalent to saying that we can recover
the dual graph of the $n$-pointed graph from the values of its cross ratios.
\end{remark}

\subsection{Equivalences of Triples}
\label{ss:3}

A {\em triple} in $N$ is a 3-element subset of $N$. We denote the set of triples in $N$ by ${N\choose 3}$. 
A set $S\subset{N\choose 3}$ of triples is called {\em diverse} if it is non-empty and it fulfills the following two axioms:
\begin{description}
\item[(D1)] If $\{i,j,k\}\in S$, and $l\in N$, then $S$ also contains one of the triples $\{ i,j,l\}$, $\{ i,k,l\}$, or $\{j,k,l\}$.
\item[(D2)] Let $a,b,c,x,y,z\in N$. If $S$ contains the triples $\{ a,x,y\}$, $\{ b,y,z\}$, and $\{ c,x,z\}$, then it also contains $\{ x,y,z\}$.
\end{description}

We say that an equivalence relation on ${N\choose 3}$ is {\itshape phylogenetic} if and only if the following axiom is fulfilled:
\begin{description}
\item[(E0)] Each class of the equivalence relation is diverse.
\end{description}
Denote as ${\cal E}_N$ the set of all phylogenetic equivalence relations on the triples of $N$.
Denote as $\overline{{\cal E}_N}$ the set of all equivalence relations on the triples of $N$.

\begin{example} \label{ex:equivsmall}
Let $N=\{1,2,3,4,5\}$. We define an equivalence relation with three classes as follows:
\[ \{ 1,2,3\}\sim\{ 1,2,4\}\sim\{ 1,2,5\} , \]
\[ \{ 1,4,5\}\sim\{ 2,4,5\}\sim\{ 3,4,5\} , \]
\[ \{ 1,3,4\}\sim\{ 1,3,5\}\sim\{ 2,3,4\}\sim\{ 2,3,5 \}. \]
It can be checked that the axioms (D1) and (D2) are fulfilled in each class, hence the equivalence is 
phylogenetic.
\end{example}


In order to construct interesting equivalence relations of triples, we need a lemma on trees.

\begin{lemma}\label{lem:unique_vertex}
Let $T=(V,E)$ be a tree, and let $i,j,k\in N$ be pairwise distinct leaves.
Then there is a unique vertex $v\in V\setminus N$ such that the three paths
from $v$ to $i$, $j$, and $k$ are
edge-disjoint.
\end{lemma}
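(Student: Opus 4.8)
The plan is to lean entirely on the defining property of a tree: between any two vertices there is a \emph{unique} path, and a concatenation of two edge-disjoint paths sharing only one endpoint is therefore automatically \emph{the} path between the outer endpoints (any second shared vertex would close a cycle). Write $P_{xy}$ for the unique path between vertices $x,y$. For existence I would define the candidate vertex $v$ as the last vertex common to $P_{ij}$ and $P_{ik}$ when both are traversed starting from $i$ — the vertex at which these two paths diverge. Splitting at $v$ gives three subpaths: the shared initial segment $[i,v]$, the tail $[v,j]$ of $P_{ij}$, and the tail $[v,k]$ of $P_{ik}$. Each union of two of them is a simple path ($[i,v]\cup[v,j]=P_{ij}$ and $[i,v]\cup[v,k]=P_{ik}$), and the two halves of a simple path meet only at their junction, so these are pairwise edge-disjoint. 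To see that $v$ also lies on $P_{jk}$, I would note that $[v,j]$ and $[v,k]$ are edge-disjoint and meet only at $v$, so their concatenation is a path from $j$ to $k$ and hence equals $P_{jk}$ by uniqueness; thus $v\in P_{jk}$ and the three paths $P_{vi},P_{vj},P_{vk}$ are pairwise edge-disjoint, as required.

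Next I would check that $v$ is an internal vertex, i.e. $v\in V\setminus N$. The key observation is that a leaf has degree one, so it can never be an \emph{interior} vertex of any path; a short case check then shows $v$ equals none of $i,j,k$ (for instance $v=j$ would force $j$ to be interior to $P_{ik}$). Consequently all three edge-disjoint paths from $v$ are nontrivial, they leave $v$ through three distinct edges, and so $\deg v\ge 3$. In particular $v$ is not a leaf, which gives $v\in V\setminus N$.

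For uniqueness I would first record the general principle that \emph{any} vertex $w$ with the edge-disjoint property must lie on all three pairwise paths $P_{ij},P_{ik},P_{jk}$: edge-disjointness of the two paths from $w$ to, say, $i$ and $j$ makes their concatenation equal to $P_{ij}$, so $w\in P_{ij}$, and likewise for the other two. It then remains to show $P_{ij}\cap P_{ik}\cap P_{jk}=\{v\}$. Using the splitting at the specific $v$ constructed above, one has $P_{ij}\cap P_{ik}=[i,v]$, while $[i,v]\cap P_{jk}=\{v\}$, each equality again being an instance of ``the two halves of a simple path meet only at their junction.'' Hence the triple intersection is the single vertex $v$, and any vertex with the property coincides with it.

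The proof is essentially a disciplined application of acyclicity, so I do not expect a serious conceptual obstacle; the only place demanding care is the degenerate configuration $v\in\{i,j,k\}$. Ruling it out (via the leaf/degree observation) is what guarantees that the three paths emanating from $v$ are genuinely nontrivial, which is exactly what turns edge-disjointness into the degree bound $\deg v\ge 3$ and thereby into the conclusion $v\notin N$.
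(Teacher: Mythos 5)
Your proposal is correct and follows essentially the same route as the paper: both define $v$ as the branch point where $\pi_{ij}$ and $\pi_{ik}$ diverge, decompose the three pairwise paths into three edge-disjoint legs meeting at $v$, and derive uniqueness from the fact that the edge-disjointness property forces $v$ to be exactly this common intersection point. You supply somewhat more detail than the paper (the degree-$\ge 3$ argument that $v\notin N$ and the explicit triple-intersection computation), but the underlying argument is the same.
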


\begin{proof}
Let $\pi_{ij}$ be the unique path connecting $i$ and $j$, and let $\pi_{ik}$ be the unique path connecting $i$ and $k$.
The common edges of $\pi_{ij}$ and $\pi_{ik}$ also form a path, which connects $i$ to some non-leaf, and this is the
vertex $v$ we are looking for. Indeed, the edges that are in $\pi_{ij}$ but not in $\pi_{ik}$ connect $j$ with $v$,
and the edges that are in $\pi_{ik}$ but not in $\pi_{ij}$ connect $v$ with $k$. The property of $v$ which is claimed
in the lemma implies that the common edges of $\pi_{ij}$ and $\pi_{ik}$ form a path from $i$ to $v$, and this implies uniqueness.
\end{proof}

\begin{definition}\label{def:tree_to_triples}
Let $T=(V,E)\in {\cal T}_N$ be a phylogenetic tree. We define an equivalence relation $\sim_T$ on ${N\choose 3}$
as follows: $\{ i,j,k\}\sim_T \{l,m,n\}$ holds if and only if
the unique non-leaf $v$ such that the three paths from $v$ to $i$, $j$, and $k$ are edge-disjoint
is equal to
the unique non-leaf $w$ such that the three paths from $w$ to $l$, $m$, and $n$ are edge-disjoint.
The function ${\cal T}_N\to \overline{{\cal E}_N}$, $[T]\mapsto \sim_T$ is denoted by $t_{TE}$, we call it
the transformation from trees to equivalence relations.
\end{definition}

\begin{example} \label{ex:tb}
Let $N=\{1,\dots,9\}$, and let $T=(V,E)$ be the phylogenetic tree in Figure~\ref{fig:tree}. Then the equivalence relation
$\sim_T$ in ${N\choose 3}$ has five equivalence classes $E_a,\dots,E_b$, corresponding to the five non-leaves in $V\setminus N$:
\begin{itemize}
\item The class $E_a$ consists of all triples $\{i,j,k\}$ such that ($i,j\in\{1,2,3\}$ and $k\in\{ 4,5,6,7,8,9\}$) or
	($i=1$, $j=2$, and $k=3$). These
	are 18+1=19 triples.
\item The class $E_b$ consists of all triples $\{i,j,k\}$ such that ($i,j\in\{1,2,3\}$ and $j\in\{ 4,5\}$ and $k\in\{ 6,7,8,9\}$), 
	or ($i=4$ and $j=5$ and $k\in\{ 1,2,3,6,7,8,9\}$). These are $24+7=31$ triples.
\item The class $E_c$ consists of all triples $\{i,j,k\}$ such that $i\in\{ 1,2,3,4,5\}$, $j\in\{ 6,7\}$ and $k\in\{ 8,9\}$.
	These are 20 triples.
\item The class $E_d$ consists of all triples $\{i,6,7\}$ such that $i\in\{ 1,2,3,4,5,8,9\}$. These are 7 triples.
\item The class $E_e$ consists of all triples $\{i,8,9\}$ such that $i\in\{ 1,2,3,4,5,6,7\}$. These are 7 triples.
\end{itemize}
Note that $19+31+20+7+7=84={9\choose 3}$, which indicates that we did not make a mistake --- every triple occurs in exactly one class.

\end{example}

In the next section, we will introduce the transformation between equivalences of triples and crossing relations.

\section{Conversions} \label{sec:conversions}

In this section, we prove that the five structures introduced in Section~\ref{sec:axioms} are equivalent.
In Section~\ref{sec:axioms}, we already introduced the maps shown in Figure~\ref{fig:diag1}, and we have seen that the
triangle contained in this diagram is commutative. We still have to show that the images of $t_{TC}$, $t_{CX}$,
$t_{TE}$ are phylogenetic, we have to construct more conversion maps so that the diagram has
directed paths between any two vertices, and we have to show that the enlarged diagram commutes.

\begin{figure}
\[ \xymatrix{
  & & {\cal T}_N \ar[lld] \ar[rrd] \ar[rdd] & & \\
  \overline{{\cal E}_N} & & & & {\cal P}_N \ar[ld] \\ 
  & \overline{{\cal X}_N} & & \overline{{\cal C}_N} \ar[ll] &
} \]
\caption{This diagram shows the conversion maps between different types of structures that have been defined 
in Section~\ref{sec:axioms}. We also have seen that the triangle on the right part is commutative.}
\label{fig:diag1}
\end{figure}
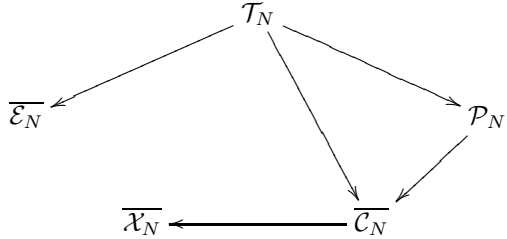

\subsection{Trees and Partitions}

\begin{definition}
For every set $P=\{p_1,\dots,p_m\}$ of partitions of $N$, we define the graph $G_P$ as follows. 
The vertex set is $N\cup P$.
Two vertices in $p,q\in P$ are connected by an edge if and only if there is a cut $(A\mid B)$ such that 
$A\in p$ and $B\in q$.
A vertex $p\in P$ and a vertex $i\in N$ are connected if $\{i\}\in p$.
There is no edge connecting two vertices in $N$.
\end{definition}

In the following, we will show that $G_P$ is a phylogenetic tree whenever $P$ is phylogenetic, and that the
construction $P\to G_P$ is the inverse of $t_{TP}$.

\begin{theorem} \label{thm:hh}
Assume that $P$ is a phylogenetic set of partitions of $N$. Then $G_P$ is a phylogenetic tree.
\end{theorem}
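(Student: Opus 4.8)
The plan is to verify the three defining properties of a phylogenetic tree for $G_P$ --- that it is a tree (connected and acyclic), that it has no vertex of degree~$2$, and that its set of leaves is exactly $N$ --- reducing each to the axioms (P1)--(P4). I would start with a degree analysis, which underlies everything else. For a leaf $i\in N$, axioms (P2) and (P3) give a unique partition $p$ with $\{i\}\in p$, and since there are no edges among elements of $N$, the vertex $i$ has degree exactly $1$. For a partition vertex $p$, I would show its degree equals its number of parts: each singleton part $\{i\}$ contributes the edge to $i$, and each part $A$ with $|A|\ge 2$ contributes, via (P4), an edge to the unique partition $q$ with $N\setminus A\in q$ (uniqueness of $q$ by (P3), noting $|N\setminus A|\ge 2$ because $p$ has at least three parts by (P1)). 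The one point needing care here is that \emph{distinct} parts of $p$ give \emph{distinct} neighbours: if $N\setminus A$ and $N\setminus A'$ both lay in one partition $q$ they would be disjoint, forcing $A\cup A'=N$ and contradicting (P1). Hence $\deg(p)$ equals the number of parts of $p$, which is $\ge 3$. Together this shows there are no degree-$2$ vertices and that the graph-leaves are exactly $N$.

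Second, I would prove acyclicity. Since leaves have degree $1$, any cycle would consist only of partition vertices $p_0-p_1-\cdots-p_{k-1}-p_0$ with $k\ge 3$. For each edge $p_i-p_{i+1}$ let $A_i\in p_i$ be the cluster of the corresponding cut lying in $p_i$, so $N\setminus A_i\in p_{i+1}$; by the degree analysis this cut, and hence $A_i$, is well defined. At $p_i$ the two incident cycle-edges use the parts $A_i$ (toward $p_{i+1}$) and $N\setminus A_{i-1}$ (toward $p_{i-1}$). These must be different parts of $p_i$: if $A_i=N\setminus A_{i-1}$ then the set $N\setminus A_i$ would be a part of both $p_{i-1}$ and $p_{i+1}$, forcing $p_{i-1}=p_{i+1}$ by (P3), impossible in a cycle. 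Being distinct parts they are disjoint, so $A_i\subseteq A_{i-1}$, and the inclusion is strict (else $p_i$ would contain the complementary parts $A_i$ and $N\setminus A_i$, contradicting (P1)). Running around the cycle yields $A_0\supsetneq A_1\supsetneq\cdots\supsetneq A_{k-1}\supsetneq A_0$, a contradiction; so $G_P$ is a forest.

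Third, and this is the step I expect to be the main obstacle, I would prove connectivity by a monotonicity argument. Fix the leaf $1$ and the unique partition $p_1$ with $\{1\}\in p_1$; since every leaf is adjacent to a partition vertex, it suffices to connect every partition vertex $q$ to $p_1$. In $q$ the element $1$ lies in a part $A$, and if $q\ne p_1$ then $A\ne\{1\}$ by (P3), so $|A|\ge 2$; by (P4) the complement $N\setminus A$ is a part of a neighbour $q'$ of $q$, and in $q'$ the element $1$ lies in a part $A'$ disjoint from $N\setminus A$, hence $A'\subseteq A$, and in fact $A'\subsetneq A$ (otherwise $q'$ would contain the complementary parts $A'=A$ and $N\setminus A$, contradicting (P1)). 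Iterating produces a strictly decreasing chain of subsets all containing $1$, which must terminate at $\{1\}$, i.e.\ at $p_1$; reading off the partitions visited gives a path from $q$ to $p_1$. Thus all partition vertices, and therefore all leaves, lie in one component. Combined with the forest property, $G_P$ is a tree; with the degree analysis it is a phylogenetic tree with leaf set $N$. The delicate points throughout are ensuring the part-to-edge correspondence is a genuine bijection (so that ``degree $=$ number of parts'' holds and each edge carries a well-defined cut) and verifying the strictness of the inclusions that drive the two monotonicity arguments.
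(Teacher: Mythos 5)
Your proof is correct and follows essentially the same route as the paper's: connectivity via a strictly descending chain of parts containing a fixed leaf, acyclicity via a strictly descending chain of clusters around a hypothetical cycle, and the degree count from (P1)--(P3). The one place you go beyond the paper is in verifying that distinct parts of a partition yield distinct neighbours (so that degree really equals the number of parts), a detail the paper's proof asserts without justification; this is a worthwhile addition but not a different approach.
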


\begin{proof}[Proof (H. Hauser)]
Let $i\in N$ and $p\in P$. We claim that there is a path connecting $i$ and $p$.
Let $A$ be the part in the partition $p$ that contains $i$. If $A=\{i\}$, then there is an edge connecting $i$ and $p$.

If $A$ has cardinality bigger than 1, then there is a unique partition $q$ containing $N\setminus A$. It also has
a unique part $B$ that contains $i$, which must be a strict subset of $A$. By induction
on the cardinality of $A$, there is a path connecting $i$ and $q$. This shows the existence
of a path connecting $i$ and $p$.

It follows that the graph $G_P$ is connected. In order to show that $G_P$ is a tree, it suffices to show that
it has no cycle. The vertices of such a cycle would have to be in $P$, because the vertices in $N$ have degree 1.

Let $(p_1,\dots,p_k,p_{k+1}=p_1)$ be a cycle. For $r=1,\dots,k$, there is a unique part $A_r\in p_r$ that contains $i$.
For the edge $e=p_1p_2$, one part of its corresponding cut, say $(I,J)$ must contain $i$, and it 
must be either $A_1$ or $A_2$. If it is $A_1$, then we have $A_{r+1}\subsetneq A_r$ for $r=1,\cdots, k$ because of
Axiom (P3). If it is $A_2$, then we have that $A_r\subsetneq A_{r+1}$ for $r=1,\cdots, k$. Both cases lead to
$A_1\subsetneq A_1$, which is a contradiction.

The degree of any vertex in $P$ is equal to the number of its parts, which is at least three.
Therefore, the tree $G_P$ is phylogenetic.
\end{proof}

If $T$ is a phylogenetic tree, then it is straightforward to check that $G_{P_T}$ is isomorphic to $T$.
Also, if $P$ is a phylogenetic set of partitions, then $P_{G_P}=P$. Hence the construction $P\to G_P$
provides the inverse to $t_{TP}:{\cal T}_N\to{\cal P}_N$.

\subsection{Trees and Cuts}

\begin{proposition}
For every phylogenetic tree $T=(V,E)$ with leaf set $N\subset V$, the set $C_T$ of cuts is phylogenetic.
\end{proposition}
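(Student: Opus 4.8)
The plan is to verify axiom (C) directly by analysing what happens when two internal edges are deleted from $T$ simultaneously. Fix two cuts in $C_T$, arising from internal edges $e_1$ and $e_2$. If $e_1=e_2$ the cuts coincide and one of the four intersections is trivially empty, so assume $e_1\neq e_2$. Recall that, since $T$ is a tree, deleting the single edge $e_1$ splits $T$ into exactly two components whose leaf sets are precisely the two clusters of the first cut, and likewise for $e_2$.

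First I would delete both edges at once. Removing two distinct edges from a tree always produces exactly three connected components: deleting $e_1$ gives two components, and $e_2$ lies in one of them, which it splits again. Call the leaf sets of these three components $P$, $Q$, $R$. Being the leaves of distinct components, $P$, $Q$, $R$ are pairwise disjoint, and since every leaf lies in exactly one component they satisfy $P\cup Q\cup R=N$. Next I would identify each cluster as a union of these pieces. Exactly one cluster of the first cut, say $A_1$, is the leaf set of the component that does not contain $e_2$; label it $P$. The other cluster $B_1$ is the leaf set of the component that did contain $e_2$, and deleting $e_2$ split it, so $B_1=Q\cup R$. Symmetrically, one cluster of the second cut is the component not containing $e_1$, namely $R$, and the other is $P\cup Q$. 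Writing the second cut as $(A_2\mid B_2)$ with $A_2=P\cup Q$ and $B_2=R$, the four intersections become $A_1\cap A_2=P$, $A_1\cap B_2=P\cap R=\emptyset$, $B_1\cap A_2=Q$, and $B_1\cap B_2=R$. Hence $A_1\cap B_2$ is empty, and axiom (C) holds.

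The main obstacle is the bookkeeping in the identification step: one must argue cleanly that the three components sit in a ``linear'' arrangement $P$--$Q$--$R$ along the path joining $e_1$ and $e_2$, so that each cluster is genuinely a union of consecutive pieces rather than an arbitrary one. This should be handled uniformly for both configurations, the case where $e_1$ and $e_2$ share a vertex (then $Q$ is the leaf set of the middle component meeting that vertex) and the case where they are joined by a nontrivial path. Once this arrangement is established, the conclusion is immediate from the pairwise disjointness of $P$, $Q$, $R$, with no computation required.
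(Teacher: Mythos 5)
Your proof is correct and follows essentially the same route as the paper: delete both internal edges, observe that the tree falls into three components whose leaf sets partition $N$, and conclude that one of the four intersections must be empty. The only cosmetic difference is that the paper finishes by a pigeonhole argument (four pairwise disjoint sets, each equal to one of only three leaf sets when nonempty), whereas you explicitly identify the clusters as unions of consecutive pieces $P$, $Q\cup R$, $P\cup Q$, $R$; both are valid.
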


\begin{proof}
Let $(A_1\mid B_1)$ and $(A_2\mid B_2)$ be two arbitrary cuts in $C_T$, and let $e_1$ and $e_2$ be their corresponding edges. 
If we remove both edges from the graph, then we get at most three components. correspondingly, we obtain three leaf sets. Each of the four sets $A_1\cap A_2$, 
$A_1\cap B_2$, $A_2\cap B_1$, $A_2\cap B_2$ equals to one of these leaf sets, if not empty. Also, note that these four sets are pairwise disjoint. Therefore,
at least one of these four sets must be empty. Since the two cuts were chosen arbitrarily, it follows that Axiom~(C) is fulfilled,
and $C_T$ is phylogenetic.
\end{proof}

For the construction of transformation $t_{CT}$ from cuts to trees, recall the following concept: if $(P,\le)$ is a finite
partially ordered set, then the {\em Hasse diagram} of $(P,\le)$ is the directed graph with vertex set $P$. And there is an
edge from vertex $a$ to vertex $b$ if and only if ($a\leq b$ and for all $c$ such that $a\le c\le b$, 
we have $a=c$ or $b=c$).

We call a set/subset with exactly one element a {\em singleton}.

\begin{definition}\label{def:construction_cuts_tree}
Let $C$ be a phylogenetic set of cuts. Let $c=(A\mid B)$ be a cut in $C$. Let $V_A$ be the set of all clusters or singletons 
that are subsets of $A$ (including $A$ itself). 
Let $G_A=(V_A,E_A)$ be the Hasse diagram of $V_A$ ordered by inclusion.
Let $V_B$ be the set of all clusters or singletons that are subsets of $B$ (including $B$ itself).
Let $G_B=(V_B,E_B)$ be the Hasse diagram of $V_B$ ordered by inclusion.

Let $G_{C,c}=(V_{C,c},E_{C,c})$ be the undirected graph with $V_{C,c}:=V_A\cup V_B$, and $E_{C,c}$ is equal
to the union of $E_A$ and $E_B$ --- forgetting the direction --- plus one extra edge connecting $A$ and $B$.
We call $G_{C,c}$ the {\em cut graph of $c$}. 
\end{definition}

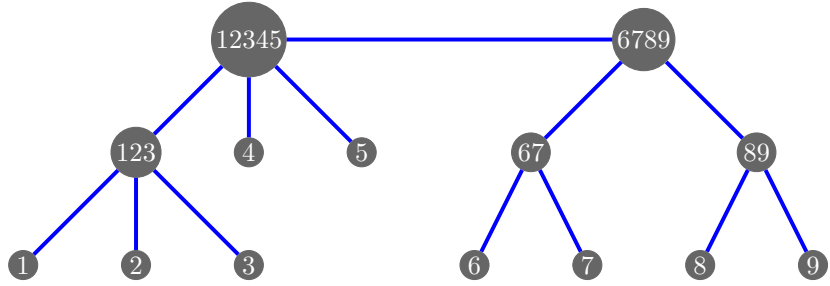
\begin{figure}
        \begin{center}
                \begin{tikzpicture}[scale=1.5]
                        \node[lnode] (1) at (0., 0.) {1};
                        \node[lnode] (2) at (1., 0.) {2};
                        \node[lnode] (3) at (2., 0.) {3};
                        \node[lnode] (4) at (2., 1.) {4};
                        \node[lnode] (5) at (3., 1.) {5};
                        \node[lnode] (6) at (4., 0.) {6};
                        \node[lnode] (7) at (5., 0.) {7};
                        \node[lnode] (8) at (6., 0.) {8};
                        \node[lnode] (9) at (7., 0.) {9};
                        \node[lnode] (A) at (1., 1.) {123};
                        \node[lnode] (B) at (2., 2.) {12345};
                        \node[lnode] (C) at (5.5, 2.) {6789};
                        \node[lnode] (D) at (4.5, 1.) {67};
                        \node[lnode] (E) at (6.5, 1.) {89};
                        \draw[bedge] (1)edge(A) (2)edge(A) (3)edge(A) (4)edge(B) (5)edge(B) (6)edge(D) (7)edge(D) (8)edge(E) (9)edge(E);
                        \draw[bedge] (A)edge(B) (B)edge(C) (C)edge(D) (C)edge(E);
                \end{tikzpicture}
        \end{center}
\caption{For the set of cuts in Example~\ref{ex:cuts}, this figure shows the two Hasse diagrams of the partial orders of clusters
	that are subsets of $A=\{1,2,3,4,5\}$ (on the left) and $B=\{6,7,8,9\}$ (on the right), respectively; and the graph $G_{C_{c_y}}$ which is defined 
	as the union of these two Hasse diagrams plus one extra edge. We omit the directions of the edges in the Hasse diagram here;
	the convention is that they are always upward.}
\label{fig:hasse}
\end{figure}

\begin{example}
Let $N=\{1,\dots,9\}$. Let $C$ be the set of cuts in Example~(\ref{ex:cuts}). Recall that $c_y=(A\mid B)$ where
$A=\{1,2,3,4,5\}$ and $B=\{6,7,8,9\}$. Figure~\ref{fig:hasse} shows the Hasse diagrams $G_A$ and $G_B$ and the
graph $G_{C,c_y}$.
\end{example}


\begin{lemma} \label{lem:cut_to_tree_phylogenetic}
Let $C$ be a phylogenetic set of cuts of $N$, and let $c=(A\mid B)\in C$. Then $G_{C,c}$ is a phylogenetic tree with
leaf set $\{ \{i\}\mid i\in N\}$.
\end{lemma}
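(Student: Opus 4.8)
The plan is to reduce everything to one structural fact: the sets appearing as vertices of $V_A$ (and of $V_B$) form a \emph{laminar} family, i.e.\ any two of them are either nested or disjoint. From laminarity the tree structure, the identification of the leaves, and the absence of degree-two vertices all follow with little effort. The key step is the laminarity claim, which I would phrase as follows: for any two clusters $S,T\in cl(C)$, exactly one of $S\cap T=\emptyset$, $S\subseteq T$, $T\subseteq S$, $S\cup T=N$ holds. Since a cut is a two-part partition of $N$, the cluster $S$ is one part of the cut $(S\mid N\setminus S)$ and $T$ of the cut $(T\mid N\setminus T)$; applying Axiom~(C) to these two cuts says that one of $S\cap T$, $S\cap(N\setminus T)$, $(N\setminus S)\cap T$, $(N\setminus S)\cap(N\setminus T)$ is empty, and these four alternatives translate directly into the four set relations above.

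Restricting to $V_A$, I would observe that every element of $V_A$ is a subset of $A$ and that $|B|\ge 2$, so $S\cup T\subseteq A\subsetneq N$ for any $S,T\in V_A$; hence the degenerate alternative $S\cup T=N$ never occurs, leaving only ``nested or disjoint''. Singletons and the top element $A$ cause no trouble, as a singleton is contained in or disjoint from any set, and every element of $V_A$ is contained in $A$. Thus $V_A$ is laminar, and the same argument gives that $V_B$ is laminar. I would then use the standard fact that in a laminar family the elements strictly containing a fixed $S$ are pairwise intersecting, hence pairwise comparable, hence form a chain, so $S$ has a unique cover (its parent). Consequently the Hasse diagram of the laminar family $V_A$ with maximum $A$ is a rooted tree with root $A$, and likewise $G_B$ is rooted at $B$. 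Since $A\cap B=\emptyset$ forces $V_A\cap V_B=\emptyset$, the trees $G_A$ and $G_B$ are vertex-disjoint, and joining two disjoint trees by the single extra edge $\{A,B\}$ yields a tree; this gives connectedness and acyclicity of $G_{C,c}$.

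It remains to analyze degrees. The minimal elements of $V_A$ are exactly the singletons $\{i\}$ with $i\in A$, because any cluster of cardinality at least two strictly contains a singleton that again lies in $V_A$; each such singleton has degree one (only the edge to its parent), and since $A\cup B=N$ every singleton $\{i\}$, $i\in N$, occurs exactly once. Hence the leaf set is precisely $\{\{i\}\mid i\in N\}$. To exclude degree-two vertices I would show that the children of any non-singleton $S$ partition $S$: by laminarity two distinct maximal proper sub-elements of $S$ are disjoint, and every $i\in S$ lies under some child because $\{i\}\in V_A$. As $|S|\ge 2$ this forces at least two children, so every internal cluster $S\neq A,B$ has degree (number of children)${}+1\ge 3$, while $A$ and $B$ have degree (number of children)${}+1\ge 3$ after adding the edge $\{A,B\}$. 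Therefore $G_{C,c}$ is a phylogenetic tree with the required leaf set.

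I expect the only genuine obstacle to be the laminarity step, and within it the careful reading of Axiom~(C) as the four set relations together with the verification that the ``$S\cup T=N$'' alternative is excluded once both sets lie inside the single cluster $A$. Once laminarity is in hand, the passage to a rooted tree and the degree bookkeeping are routine, so I would present those more briefly.
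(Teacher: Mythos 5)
Your proof is correct and follows essentially the same route as the paper's: both extract from Axiom~(C) the nested structure of the clusters on each side of the cut, conclude that the two Hasse diagrams are trees joined into one by the extra edge, identify the leaves with the singletons, and rule out degree-two vertices by showing that a vertex with a unique child would have to omit some singleton contained in it. Your explicit laminar-family/unique-parent packaging is just a more systematic write-up of what the paper does via ``intervals are chains'' and the ``second largest element'' argument, so the underlying ideas coincide.
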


\begin{proof}
Since the partially ordered sets $V_A$ and $V_B$ have a largest element, the two Hasse diagrams are connected. The 
extra edge connects the two Hasse diagrams, therefore $G_{C,c}$ is connected.

Let $v,w\in V_A$. By Axiom~(C), the set of clusters $u$ such that $v\le u\le w$ is totally ordered by inclusion.
Therefore, the Hasse diagram $G_A$ has no cycle. The same holds for the Hasse diagram $G_B$. Hence both Hasse diagrams are trees.
$G_{C,c}$ is obtained by connecting two trees with one extra edge, and so $G_{C,c}$ is also a tree. Its leaves are the
minimal elements of the two partial orders, which are exactly the singletons of $N$.

Assume, for the sake of contradiction, that $G_{C,c}$ has a vertex of degree two. Then there is a cluster $D$ of $C$ such that 
the partially ordered set $V_D$ has a ``second largest element'' $D'$, i.e. an element which is largest in 
the subset of elements not equal to $D$. Then $D\setminus D'$ is not empty, hence there is a singleton $\{a\}$
such that $\{a\}\subseteq D$ but $\{a\}\not\subseteq D'$, which is a contradiction.
\end{proof}

\begin{proposition} \label{prop:T2C}
Let $T=(V,E)$ be a phylogenetic tree with leaf set $N\subset V$. Let $e=\{u,v\}\in E$ be an internal edge and 
$c=(A\mid B) \in C_T$ be its corresponding cut. 
Then the phylogenetic tree $G_{C_T,c}=(V_G,E_G)$ is isomorphic to $T$. 
\end{proposition}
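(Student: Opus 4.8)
The plan is to build an explicit vertex bijection $\phi\colon V\to V_G$ and check that it carries edges to edges in both directions. Since $T$ is a tree and $G_{C_T,c}$ is a tree by Lemma~\ref{lem:cut_to_tree_phylogenetic}, matching up the vertices and the edges suffices, and in fact I will get the edge correspondence for free once $\phi$ is realized as an order isomorphism between the relevant posets.

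First I would fix the decomposition induced by $e$. Removing the internal edge $e=\{u,v\}$ splits $T$ into two subtrees $T_u$ and $T_v$, containing $u$ and $v$ respectively, whose leaf sets are exactly $A$ and $B$. Rooting $T_u$ at $u$, I assign to each vertex $w$ of $T_u$ the set $L(w)\subseteq A$ of leaves lying in the subtree below $w$, and symmetrically for $T_v$ rooted at $v$. I then define $\phi(i)=\{i\}$ on leaves $i\in N$ and $\phi(w)=L(w)$ on internal vertices, using $T_u$ or $T_v$ according to the side of $e$ on which $w$ lies; in particular $\phi(u)=A$ and $\phi(v)=B$.

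The technical heart is to show that $w\mapsto L(w)$ is an order isomorphism from the vertices of $T_u$, ordered so that $x\le y$ iff $x$ is a descendant of $y$, onto $(V_A,\subseteq)$ (and likewise for $T_v$ and $V_B$). Three things need checking: (i) well-definedness, i.e. $L(w)$ is a singleton for a leaf $w$ and a cluster of $C_T$ contained in $A$ for internal $w$ — this holds because the parent edge of an internal $w$ is itself internal, so cutting it realizes $L(w)$ as a cluster; (ii) injectivity, the standard fact that distinct vertices of a rooted tree have distinct descendant leaf sets; and (iii) surjectivity onto all of $V_A$, including $A=L(u)$. Step (iii) is the step I expect to be the main obstacle: given a cluster $C'\subseteq A$, I must argue that its generating internal edge $e'$ lies inside $T_u$ (or equals $e$), so that $C'$ is the descendant leaf set of the endpoint of $e'$ farther from $u$. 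The argument is that $e'$ cannot lie in $T_v$, for then one side of its cut would be a nonempty subset of $B$, disjoint from $A$ and hence unequal to $C'$, while the other side would contain all of $A$ and so also differ from $C'$; thus $e'\in T_u\cup\{e\}$ and $C'$ is the leaf set of the component avoiding $u$. That the map is order-preserving in both directions — ancestor corresponding to superset — is immediate from the definition of $L$.

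Finally I would assemble the pieces. Because $L$ is an order isomorphism, the Hasse diagram $G_A$ of $(V_A,\subseteq)$ is isomorphic, via $\phi$, to the Hasse diagram of the descendant order on $T_u$, which is exactly the rooted tree $T_u$; likewise $G_B\cong T_v$. The one remaining edge of $G_{C_T,c}$ is the extra edge $\{A,B\}=\{\phi(u),\phi(v)\}$, precisely the image of $e$. Hence $\phi$ carries $E=E(T_u)\cup E(T_v)\cup\{e\}$ bijectively onto $E_G=E_A\cup E_B\cup\{\{A,B\}\}$, and since $\phi$ realizes the identification $i\leftrightarrow\{i\}$ on leaves, it is the required isomorphism of phylogenetic trees.
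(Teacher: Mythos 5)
Your proposal is correct and follows essentially the same route as the paper: the same decomposition of $T$ into $T_u$ and $T_v$, and the same vertex bijection --- your $L(w)$ is exactly the paper's $f(w)$, the cluster cut off by the first edge on the path from $w$ toward the far endpoint of $e$ --- with surjectivity handled by the same ``the generating edge of a cluster $C'\subseteq A$ must lie in $T_u$'' argument. The only real difference is presentational: you get the edge correspondence at once from the fact that an order isomorphism onto $(V_A,\subseteq)$ induces an isomorphism of Hasse diagrams, where the paper checks the edge cases by hand.
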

\begin{proof}
We obtain two components $T_u$ and $T_v$ after removing 
$e$ from $T$. The set of leaves of $T_u$ is $A\cup\{u\}$, and the set of leaves of $T_v$ is $B\cup\{v\}$. 
We define a map $f:V\to V_G$. Let $w\in V$.
\begin{enumerate}
\item If $w$ is a leaf of $T$, then $f(w):=\{w\}$. 
\item If $w$ is an internal vertex of $T$ contained in $T_u$, then let $e'$ denote the first edge on the unique path in $T$ from $w$ to $v$. 
Let $(A'\mid B')$ be the corresponding cut, and assume without loss of generality that $A'\subset A$.
We set $f(w):=A'$.
\item Analogously, if $w$ is an internal vertex $T$ contained in $T_v$, then let $e'$ denote the first edge on the unique path in $T$ from $w$ to $u$. 
Let $(A'\mid B')$ be the corresponding cut, and assume without loss of generality that $B'\subset B$.
We set $f(w):=B'$.
\end{enumerate}
Injectivity of $f$ is a consequence of the fact that $P_T$ satisfies the axiom (P3). We claim that $f$ is also surjective.
Let $x$ be a vertex of $V_G$. If $x=\{i\}$ is a singleton, then $f(i)=x$. If $x=A_1$ is a cluster contained in $A$, then
let $e_1$ be the edge corresponding to the cut $(A_1\mid N\setminus A_1)$. Then $f$ maps one of the two vertices of $e_1$ to $x$.
Similarly, we can find a preimage if $x=A_1$ is a cluster contained in $B$.
Therefore, $f$ provides a bijection between $V$ and $V_G$. 

We claim that $f$ is a graph isomorphism. Let $v_1,v_2\in V$. If $v_1,v_2$ are both leaves, then $\{v_1,v_2\}$ is not an edge of $T$
and $\{f(v_1),f(v_2)\}$ is not an edge of $E_G$.

Assume $v_1\in N$ and $v_2\not\in N$. Then $\{v_1,v_2\}$ is an edge of $T$ if and only if
$v_2$ is the unique vertex adjacent to $v_1$, and this is true if and only if $f(v_2)$ is the unique
minimal cluster contained in $A$ and containing $v_1$, and this is true if and only if 
$\{f(v_1),f(v_2)\}$ is an edge in $E_G$. The case $v_1\in B$ is treated analogously.

If $v_1$ and $v_2$ are non-leaves of $T$ contained in $T_u$. Assume that $e'=\{v_1,v_2\}$ is an edge of $T$. 
Let $(A'\mid B')$ be the corresponding cut; without loss of generality, assume $A'\subset A$ and
$A'\cup \{v_1\}$ is the set of leaves of one of the two components that we get when we remove $e'$ from $T$.
Let $\{v_2,v_3\}$ be the first edge on the path from $v_2$ to $u$ and let $(A''\mid B'')$ be its corresponding cut 
such that $A''\subset A$. Then we see that $f(v_1)=A'$ and $f(v_2)=A''$. Also we know that $A'\subset A''\subset A$
and there is no other cluster of $C_T$ in between $A''$ and $A'$ with respect to inclusion.
This implies that $\{f(v_1),f(v_2)\}$ is an edge in $E_G$. Conversely, if $\{f(v_1),f(v_2)\}$ is an edge in $E_G$,
then two edges corresponding to the cuts $(f(v_1)\mid N\setminus f(v_1))$ and $(f(v_2)\mid N\setminus f(v_2))$
have to equal, and the corresponding edge is the edge $\{v_1,v_2\}$ in $T$. The case where both $v_1$ and $v_2$ 
are non-leaves of $T$ contained in $T_v$ is similar.

If $v_1\in T_u$ and $v_2\in T_v$, then $\{v_1,v_2\}$ is an edge of $T$ if and only if $v_1=u$ and $v_2=v$,
and this is true if and only if $\{f(v_1),f(v_2)\}$ is an edge in $E_G$.
\end{proof}

\begin{corollary}
 Let $T$ be a phylogenetic tree. For any two cuts $c_1,c_2\in C_T$, the cut graphs $G_{C_T,c_1}$ 
 and $G_{C_T,c_2}$ and $T$ are all isomorphic. 
\end{corollary}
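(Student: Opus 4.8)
The plan is to invoke Proposition~\ref{prop:T2C} as a black box, since it has already done all of the real work. That proposition states that for \emph{any} internal edge $e$ of $T$ with corresponding cut $c=(A\mid B)\in C_T$, the cut graph $G_{C_T,c}$ is isomorphic to $T$. The corollary asserts the same conclusion but phrased symmetrically for two cuts; the key observation is that the hypothesis ``$c_1,c_2\in C_T$'' means precisely that $c_1$ and $c_2$ are the cuts associated to two internal edges $e_1,e_2$ of $T$ (by the very definition of $C_T$). So both cut graphs fall directly under the hypotheses of the proposition.

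First I would recall that by the definition of $t_{TC}$, every element of $C_T$ corresponds to an internal edge of $T$, so for each $i\in\{1,2\}$ there is an internal edge $e_i\in E$ whose corresponding cut is $c_i$. Applying Proposition~\ref{prop:T2C} to the pair $(e_1,c_1)$ gives a graph isomorphism $G_{C_T,c_1}\cong T$, and applying it to $(e_2,c_2)$ gives $G_{C_T,c_2}\cong T$. Finally, composing the first isomorphism with the inverse of the second yields $G_{C_T,c_1}\cong G_{C_T,c_2}$, and transitivity of the isomorphism relation closes the argument. The whole proof is therefore a two-line application of the proposition plus transitivity.

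I do not anticipate a genuine obstacle here, as this is essentially a restatement of Proposition~\ref{prop:T2C} in which the distinguished edge is allowed to vary. The only point that merits a sentence of care is the quiet identification between ``cuts in $C_T$'' and ``internal edges of $T$'': one must note that this correspondence is exactly what the construction of $C_T$ provides, so that the hypothesis of the corollary does supply the data the proposition requires. Once that is spelled out, the statement follows immediately, and in fact the corollary could be compressed into a remark. I would keep the proof to a single short paragraph, writing something like: ``By definition of $C_T$, the cuts $c_1$ and $c_2$ correspond to internal edges $e_1,e_2$ of $T$. By Proposition~\ref{prop:T2C}, both $G_{C_T,c_1}$ and $G_{C_T,c_2}$ are isomorphic to $T$, and hence to each other.''
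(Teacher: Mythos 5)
Your argument is correct and is exactly what the paper intends: its proof of this corollary is the single line ``Immediate consequence of Proposition~\ref{prop:T2C},'' which unpacks to precisely your two applications of the proposition plus transitivity of isomorphism. No further comment is needed.
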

\begin{proof}
 Immediate consequence of Proposition \ref{prop:T2C}.
\end{proof}

\begin{lemma} 
Let $C$ be a phylogenetic set of cuts of $N$. Let $c=(A\mid B)\in C$ be a cut. Then $C$ is equal to $C_{G_{C,c}}$.
\end{lemma}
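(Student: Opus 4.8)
The plan is to compute the set of cuts $C_{G_{C,c}}$ of the tree $G_{C,c}$ explicitly and then compare it with $C$ by double inclusion. By Lemma~\ref{lem:cut_to_tree_phylogenetic} we already know that $G_{C,c}$ is a phylogenetic tree whose leaves are the singletons $\{i\}$, which we identify with $N$; hence $C_{G_{C,c}}$ contains exactly one cut for each internal edge (edge joining two non-leaves), the two clusters of that cut being the leaf sets of the two components obtained by deleting the edge. The first step is therefore to identify, for each internal edge, which subset of $N$ it cuts off. Recall that $G_{C,c}$ is the union of the Hasse diagrams $G_A$ and $G_B$ together with the extra edge $\{A,B\}$. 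I would first record that $V_A$ (and likewise $V_B$) is a rooted tree order: Axiom~(C) forces every interval $\{u : D\le u\le A\}$ to be totally ordered (this is precisely the observation already used inside the proof of Lemma~\ref{lem:cut_to_tree_phylogenetic}), so each non-maximal element has a unique parent and $G_A$ is a tree rooted at $A$. Consequently, deleting an internal edge $\{D',D\}$ of $G_A$ with $D'\lessdot D$ and $D'$ a non-singleton cluster cuts off exactly the principal down-set of $D'$, whose leaves are precisely the singletons $\{i\}$ with $i\in D'$; the associated cut is thus $(D'\mid N\setminus D')$. The same holds on the $B$-side, while deleting the extra edge yields $(A\mid B)=c$. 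This gives the explicit description
\[
C_{G_{C,c}} \;=\; \{c\}\cup\{(D'\mid N\setminus D') : D'\in cl(C),\ D'\subsetneq A \text{ or } D'\subsetneq B\}.
\]

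Granting this description, the inclusion $C_{G_{C,c}}\subseteq C$ is immediate: every cut listed has the form $(D'\mid N\setminus D')$ with $D'$ a cluster of $C$ (namely $A$, $B$, or a proper cluster-subset of $A$ or of $B$), and by definition a cluster of $C$ is one part of an actual cut of $C$ whose other part is its complement, so $(D'\mid N\setminus D')\in C$.

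For the reverse inclusion $C\subseteq C_{G_{C,c}}$ I would take an arbitrary cut $(D\mid N\setminus D)\in C$ and apply Axiom~(C) to the pair $(A\mid B)$ and $(D\mid N\setminus D)$: one of the four sets $A\cap D$, $A\cap(N\setminus D)$, $B\cap D$, $B\cap(N\setminus D)$ is empty. Each case places one of the two clusters of the cut inside $A$ or inside $B$ (for instance $A\cap D=\emptyset$ gives $D\subseteq B$, and $A\cap(N\setminus D)=\emptyset$ gives $A\subseteq D$, i.e. $N\setminus D\subseteq B$). Writing $D^{*}$ for that cluster, the cut equals $(D^{*}\mid N\setminus D^{*})$ with $D^{*}\subseteq A$ or $D^{*}\subseteq B$; if $D^{*}\in\{A,B\}$ the cut is $c$, and otherwise $D^{*}$ is a proper cluster-subset of $A$ or of $B$. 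In either case the cut appears in the explicit description of $C_{G_{C,c}}$, which completes the proof that $C=C_{G_{C,c}}$.

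I expect the main obstacle to be the first step, namely pinning down exactly which subset of $N$ each internal edge cuts off. This requires knowing that $V_A$ and $V_B$ are genuine rooted tree orders (unique parents), so that the down-set of a cluster $D'$ has leaf set exactly $\{\{i\}:i\in D'\}$ and the deleted edge therefore produces the cut $(D'\mid N\setminus D')$ rather than some coarser partition. Once this geometric picture is secured from Axiom~(C), both inclusions reduce to a one-line application of Axiom~(C) and the definition of a cluster, so the remainder is routine.
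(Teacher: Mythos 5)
Your proof is correct and follows essentially the same route as the paper's: both directions hinge on (i) identifying the cut of each internal Hasse-diagram edge above a cluster $D'$ as $(D'\mid N\setminus D')$, and (ii) using Axiom~(C) to force every cut of $C$ to have one cluster contained in $A$ or in $B$. The only difference is presentational: you front-load an explicit formula for $C_{G_{C,c}}$ and justify the down-set/unique-parent step that the paper passes over with ``then we see that,'' which is a welcome bit of extra care but not a different argument.
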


\begin{proof}
Denote the last added edge in the construction of $G_{C,c}$ as $e=\{v,u\}$ and assume without loss of generality that $A$ 
is the leaf set of component $T_v$ when we remove edge $e$ from $G_{C,c}$. Let $c'=(A'\mid B')$ be an arbitrary cut in $C$. 
If $c'=c$, then it is equal to the cut in $C_{G_{C,c}}$ corresponding to the edge $e$, hence $c'\in C_{G_{C,c}}$.

Assume $c'\ne c$. Because $C$ is phylogenetic, we know that exactly one of the following four statements 
$A'\subset A$, $A'\subset B$, $B'\subset A$, $B'\subset B$ is true. Without loss of generality, assume that $A'\subset A$.
Then $A'\in V(A)$. Let $w$ be the first vertex on the unique path from $A'$ to $u$. Then we see that 
the corresponding cut for edge $\{A', w\}$ is $(A'\mid B')$. Hence $(A'\mid B')\in C_{G_{C,c}}$.
Because $c'$ was chosen arbitrarily, we conclude that $C\subset C_{G_{C,c}}$. 

Now, take any cut $c'=(A'\mid B')\in C_{G_{C,c}}$. If $c'$ is the cut corresponding to the edge $e$, then $c=\{A\mid B\}$,
which implies $c\in C$. Assume $c'$ corresponds to some edge $e'=\{v',u'\}$ in $T_v$. Without loss of generality,
we may suppose that $u'$ is on the unique path from $v'$ to $u$. Then $c'=(v'\mid N\setminus v')$. Since $v'$ is a cluster
of $C$, we obtain that $c'\in C$. If  $c'$ corresponds to some edge in $T_u$, we proceed analogously. Therefore we
get $C_{G_{C,c}}\subset C$ and consequently $C= C_{G_{C,c}}$.
\end{proof}

For any phylogenetic set of cuts $C$, we can choose a cut $c\in C$. The class of $G_{C,c}$ does not depend on the choice
of $c$, so this construction provides an inverse to $t_{TC}:{\cal T}_N\to {\cal C}_N$.

\subsection{Cuts and Crossings}

\begin{proposition}
For every phylogenetic set $C$ of cuts, the crossing relation $X_C$ is phylogenetic.
\end{proposition}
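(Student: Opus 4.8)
The plan is to verify the three axioms (X1), (X2), (X3) separately, translating each crossing in $X_C$ back into a cut of $C$ that witnesses it and then invoking Axiom (C). Throughout I would write $(A\mid B)\in C$ for a witnessing cut, so that $(i,j\mid k,l)\in X_C$ means precisely that some cut of $C$ has $i,j$ in one cluster and $k,l$ in the other. The distinctness conditions built into the definition of a cross will be checked routinely in each case from the hypotheses.

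For (X1) I would argue by contradiction. If both $(i,j\mid k,l)$ and $(i,k\mid j,l)$ were in $X_C$, pick witnessing cuts $(A\mid B)$ and $(A'\mid B')$ with $i,j\in A$, $k,l\in B$, $i,k\in A'$, and $j,l\in B'$. Then $i\in A\cap A'$, $j\in A\cap B'$, $k\in B\cap A'$, and $l\in B\cap B'$, so all four intersections appearing in Axiom (C) are nonempty, contradicting (C). Hence (X1) holds.

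The axiom (X2) is the one requiring real work, and I expect it to be the main obstacle. Suppose $(i,j\mid k,l)$ and $(i,j\mid k,m)$ lie in $X_C$ with $l\ne m$, witnessed by cuts $(A_1\mid B_1)$ (with $i,j\in A_1$, $k,l\in B_1$) and $(A_2\mid B_2)$ (with $i,j\in A_2$, $k,m\in B_2$). Applying (C) to this pair, I note that $i\in A_1\cap A_2$ and $k\in B_1\cap B_2$, so two of the four intersections are already nonempty; hence the empty one must be $A_1\cap B_2$ or $B_1\cap A_2$. If $A_1\cap B_2=\emptyset$, then $B_2\subseteq B_1$, so $m\in B_1$, and the original cut $(A_1\mid B_1)$ already has $i,j\in A_1$ and $l,m\in B_1$, giving $(i,j\mid l,m)\in X_C$. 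If instead $B_1\cap A_2=\emptyset$, then $A_2\subseteq A_1$, which forces $l\in B_2$ (as $l\in B_1$ cannot lie in $A_2\subseteq A_1$), so the cut $(A_2\mid B_2)$ has $i,j\in A_2$ and $l,m\in B_2$, again giving $(i,j\mid l,m)\in X_C$. The pairwise distinctness of $i,j,l,m$ follows from the two hypotheses together with $l\ne m$.

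Finally, (X3) needs only a single cut. Given $(i,j\mid k,l)\in X_C$ witnessed by $(A\mid B)$ with $i,j\in A$, $k,l\in B$, and $m\notin\{i,j,k,l\}$, I would distinguish the side of the cut containing $m$. If $m\in A$, then $i,m\in A$ and $k,l\in B$, so $(i,m\mid k,l)\in X_C$; if $m\in B$, then $i,j\in A$ and $k,m\in B$, so $(i,j\mid k,m)\in X_C$. In either case one of the two required crossings holds, establishing (X3) and completing the proof.
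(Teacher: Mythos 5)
Your proof is correct, and for axioms (X1) and (X3) it is identical to the paper's: the same contradiction with Axiom~(C) for (X1), and the same dichotomy on which cluster of the witnessing cut contains $m$ for (X3). The only divergence is in (X2). The paper considers the family of \emph{all} clusters containing $i,j$ but not the common element, notes it is totally ordered by inclusion via Axiom~(C), and takes its smallest member to build a witnessing cut for the conclusion. You instead apply Axiom~(C) directly to the two given witnesses $(A_1\mid B_1)$ and $(A_2\mid B_2)$, observe that $i\in A_1\cap A_2$ and $k\in B_1\cap B_2$ force the empty intersection to be $A_1\cap B_2$ or $B_1\cap A_2$, and conclude in either case that one of the two original cuts already separates $\{i,j\}$ from $\{l,m\}$. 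This is slightly more economical (no auxiliary minimal cluster is needed, and it shows the witness can be taken among the given cuts), while the paper's version is the one that generalizes to the chain arguments used elsewhere (e.g.\ in the proof that $t_{XC}$ inverts $t_{CX}$). Both are valid; your case analysis and the distinctness checks are complete.
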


\begin{proof}
Assume that $i,j,k,l,m$ are pairwise distinct (but otherwise arbitrary) elements of $N$. Assume, for the sake of contradiction, that $(i,j\mid k,l)$ and
$(i,k\mid j,l)$ are both in $X_C$. Then there is a cut $(A_1\mid B_1)$ such that $i,j\in A_1$ and $k,l\in B_1$, and another
cut $(A_2\mid B_2)$ such that $i,k\in A_2$ and $j,l\in B_2$. Then all four sets $A_1\cap A_2$, $A_1\cap B_2$, $B_1\cap A_2$,
and $B_2\cap B_2$ are not empty. This contradicts Axiom~(C). Hence the assumption must have been wrong, which proves
that Axiom~(X1) is fulfilled.

Now assume that $(i,j\mid k,l)$ and $(i,j\mid l,m)$ are both in $X_C$. 
By Axiom~(C), the set of all clusters that contain both $i$ and $j$ and that do not contain $l$ is totally ordered by
inclusion. Let $A$ be the smallest such cluster. Then $A$ does not contain $k$ and does not contain $m$. Then $(A\mid N\setminus A)$
is a cut with $i,j$ on the left side and $k,m$ in the right side. Hence $(i,j\mid k,m)$ is in $X_C$, and it follows
that Axiom~(X2) is fulfilled.

Now assume that $(i,j\mid k,l)$ is in $X_C$. Then there is a cut $(A\mid B)$ such that $i,j\in A$ and $k,l\in B$. If $m\in A$,
then $(i,m\mid k,l)$ is in $X_C$, and if $m\in B$, then $(i,j\mid l,m)$ is in $X_C$. It follows that Axiom~(X3) is fulfilled,
and that $X_C$ is phylogenetic.
\end{proof}

The following definition is only needed for the proof of Lemma~\ref{lem:partial}.

\begin{definition}
A {\em partial cut} of $N$ is a cut of some subset of $N$. 
Fix a phylogenetic crossing relation $X$. 
We say that a cut or a partial cut $(A\mid B)$ is {\em compatible} with $X$ if and only if for any distinct $i,j\in A$
and distinct $k,l\in B$, there is a crossing relation $(i,j\mid k,l)\in X$.
\end{definition}

\begin{example} \label{ex:ibase}
If $(i,j\mid k,l)\in X$, then the cross itself, considered as a partial cut, is compatible with $X$.
\end{example}

\begin{lemma} \label{lem:partial}
Let $X$ be a phylogenetic crossing relation on $N$. Then, for any $(i,j\mid k,l)\in X$. 
Then there exists a cut $(A\mid B)$ compatible with $X$ such that $i,j\in A$ and $k,l\in B$.
\end{lemma}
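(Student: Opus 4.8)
The plan is to construct the cut by a greedy extension, inserting the elements of $N\setminus\{i,j,k,l\}$ one at a time while keeping a \emph{compatible partial cut}. I would start from the partial cut $(\{i,j\}\mid\{k,l\})$, which is compatible with $X$ by Example~\ref{ex:ibase}, and then enlarge it repeatedly: given a compatible partial cut $(A\mid B)$ with $i,j\in A$, $k,l\in B$ and $A\cup B\ne N$, I pick any $m\in N\setminus(A\cup B)$ and show that $m$ can be placed into $A$ or into $B$ so that the result is again compatible. Since we only ever add new elements, the conditions $i,j\in A$ and $k,l\in B$ persist, and $|A|,|B|\ge 2$ holds automatically; once $A\cup B=N$ we obtain the desired cut. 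So everything reduces to the inductive step, which is where I expect the real work to lie.

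For the step, note that placing $m$ into $A$ preserves compatibility if and only if $(p,m\mid r,s)\in X$ for every $p\in A$ and all distinct $r,s\in B$ (call this condition $\Phi_A$), while placing $m$ into $B$ preserves it if and only if $(p,q\mid r,m)\in X$ for all distinct $p,q\in A$ and every $r\in B$ (condition $\Phi_B$). I want to prove $\Phi_A$ or $\Phi_B$. The engine is a local dichotomy: for any cross $(p,q\mid r,s)$ with $p,q\in A$ and $r,s\in B$ — which lies in $X$ because $(A\mid B)$ is compatible — applying Axiom~(X3) in its four symmetric forms (swapping the roles of $p,q$ and of $r,s$) yields the four disjunctions $P\lor R$, $P\lor S$, $Q\lor R$, $Q\lor S$, where $P,Q$ abbreviate the two candidate ``$B$-side'' crosses $(p,q\mid r,m)$, $(p,q\mid s,m)$ and $R,S$ the two candidate ``$A$-side'' crosses $(p,m\mid r,s)$, $(q,m\mid r,s)$. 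Their conjunction simplifies to $(P\land Q)\lor(R\land S)$, so $m$ lies either on the $A$-side or on the $B$-side of each such cross.

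To turn this per-cross statement into the single global choice of side, I would assume $\neg\Phi_B$ and deduce $\Phi_A$. If $\Phi_B$ fails, fix a witness $(p_1,q_1\mid t,m)\notin X$ with $p_1,q_1\in A$ and $t\in B$. For every $u\in B\setminus\{t\}$, the local dichotomy applied to $(p_1,q_1\mid t,u)$ cannot land on the $B$-side, since that would force $(p_1,q_1\mid t,m)\in X$; hence it lands on the $A$-side, giving $(p_1,m\mid t,u)\in X$. Feeding these into Axiom~(X2) with the fixed pair $\{p_1,m\}$ yields $(p_1,m\mid r,s)\in X$ for all distinct $r,s\in B$. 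Finally, for any $p\in A$, combining $(p_1,m\mid r,s)$ with the compatibility cross $(p,p_1\mid r,s)$ through a second application of Axiom~(X2), now with the fixed pair $\{r,s\}$, gives $(p,m\mid r,s)\in X$, which is exactly $\Phi_A$. (Axiom~(X1) is not needed for existence; it will only matter later, for consistency and uniqueness.) The main obstacle is precisely this passage from the per-cross dichotomy to one global side: the key observation that makes it go through is that the failure of $\Phi_B$ propagates, via the two (X2) steps, to the full strength of $\Phi_A$.
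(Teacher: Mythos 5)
Your proposal is correct and follows essentially the same route as the paper's proof: a greedy/inductive extension of a compatible partial cut one element at a time, with the inductive step settled by Axiom~(X3) to force $m$ onto one side of each existing cross and Axiom~(X2) to merge these into a global placement. The only difference is cosmetic --- the paper assumes both placements of $m$ fail and derives a contradiction from the two witnesses, while you take the contrapositive (one failure forces the full condition $\Phi_A$), packaging the (X3) applications as the dichotomy $(P\land Q)\lor(R\land S)$.
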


\begin{proof}
We prove that for any $n$ such that $4\le n\le |N|$, there is a partial cut $(A\mid B)$ 
compatible with $X$ such that $i,j\in A$ and $k,l\in B$,
and $|A|+|B|=n$. We proceed by induction on $n$. For $n=4$, the statement is trivially true. 

Assume $5\le n\le |N|$. By induction hypothesis, there exists a compatible partial cut $(C\mid D)$ such that 
$i,j\in C$, $k,l\in D$, and $|C|+|D|=n-1$. 
Let $m\in N\setminus (C\cup D)$. We claim that either $(C\cup \{m\}\mid D)$ or $(C\mid D\cup\{m\})$ is compatible with $X$.
Assume, for the sake of contradiction, that this claim is wrong. Then there exist $a,b,p\in C$ and $c,r,s\in D$ such that $a\ne b$, $r\ne s$, and
the relations $(a,b\mid c,m)$ and $(p,m\mid r,s)$ do not hold. We may also assume $a\ne p$ and $r\ne c$.

Since $(C\mid D)$ is compatible, it follows that $(a,p\mid r,s)$ holds. By Axiom~(X3), it follows that
$(a,p\mid r,m)$ or $(p,m\mid r,s)$ holds --- but we have that $(p,m\mid r,s)$ does not hold, hence we have $(a,p\mid r,m)$.
If $b=p$, then $(a,b\mid r,m)$ holds. If $b\ne p$, then we use
$(b,p\mid r,s)$ and Axiom~(X3) and get $(b,p\mid r,m)$, since $(p,m\mid r,s)$ does not hold. Then, from $(a,p\mid r,m)$ and 
$(b,p\mid r,m)$, we obtain $(a,b\mid r,m)$ by Axiom~(X2).
By the compatibility of $(C\mid D)$, we get $(a,b\mid r,c)$.
By Axiom~(X2), from $(a,b\mid r,c)$ and $(a,b\mid r,m)$, we get $(a,b\mid c,m)$. This contradicts the assumption.
\end{proof}

We can now the define a transformation $t_{XC}$ from sets of cuts to crossing relations.

\begin{definition}
For any phylogenetic crossing relation $X$ on $N$,
we define $t_{XC}(X)$ as the set of all cuts that are compatible with $X$ and 
denote it as $C_X$.
\end{definition}

\begin{proposition}
For any phylogenetic crossing relation $X$, the set $C_X$ of compatible cuts is phylogenetic.
\end{proposition}

\begin{proof}
Assume, for the sake of contradiction, that $C_X$ does not fulfill Axiom~(C). Then there exists cuts $(A_1\mid B_1)$, $(A_2\mid B_2)$ in $C_X$
and four elements $i\in A_1\cap A_2$, $j\in A_1\cap B_2$, $k\in B_1\cap A_2$, and $l\in B_1\cap B_2$. 
Because $(A_1\mid B_1)$ is compatible, we have $(i,j\mid k,l)\in X$.
Because $(A_2\mid B_2)$ is compatible, we have $(i,k\mid j,l)\in X$.
This contradicts Axiom~(X1).
\end{proof}

\begin{theorem}
The two sets ${\cal X}_N$ and ${\cal C}_N$ are in bijection:
function $t_{XC}:{\cal X}_N\to {\cal C}_N$ is the inverse of function $t_{CX}:{\cal C}_N\to {\cal X}_N$.
\end{theorem}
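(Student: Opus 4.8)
The plan is to prove the two maps are mutually inverse by checking that both composites are the identity; equivalently, that $X_{C_X}=X$ for every phylogenetic crossing relation $X$ and that $C_{X_C}=C$ for every phylogenetic set of cuts $C$. Since the preceding propositions already guarantee that $t_{CX}$ lands in $\mathcal{X}_N$ and $t_{XC}$ lands in $\mathcal{C}_N$, only these two set-equalities remain, and I would split each into two inclusions.

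First I would dispatch $X_{C_X}=X$, which I expect to be the routine half. The inclusion $X_{C_X}\subseteq X$ is immediate from the definition of compatibility: a cross in $X_{C_X}$ is witnessed by a cut $(A\mid B)\in C_X$ with $i,j\in A$ and $k,l\in B$, and compatibility of $(A\mid B)$ with $X$ forces $(i,j\mid k,l)\in X$ directly. The reverse inclusion $X\subseteq X_{C_X}$ is exactly what Lemma~\ref{lem:partial} is built for: given $(i,j\mid k,l)\in X$, it produces a compatible cut separating $\{i,j\}$ from $\{k,l\}$, i.e.\ a cut in $C_X$ witnessing the same cross. The easy inclusion $C\subseteq C_{X_C}$ of the second equality is equally quick, since every cut of $C$ manifestly realizes all the crosses that compatibility with $X_C$ demands.

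The crux --- and the step I expect to be the main obstacle --- is the reverse inclusion $C_{X_C}\subseteq C$: a cut that is merely compatible with $X_C$ must already be one of the cuts of $C$. The difficulty is the jump from a local hypothesis (a statement about every four-element quartet with two leaves on each side) to a global conclusion (a single split of $N$). Here I would exploit the cut--tree correspondence established earlier in this section, writing $C=C_T$ for the tree $T=G_{C,c}$, so that $X_C=X_{C_T}$ and a cross $(i,j\mid k,l)$ lies in $X_C$ exactly when some edge of $T$ separates $\{i,j\}$ from $\{k,l\}$ --- that is, when the quartet has topology $ij\mid kl$ in $T$ (cf.\ Lemma~\ref{lem:unique_vertex}). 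Given a compatible cut $(A\mid B)$, I would consider the minimal subtrees $T_A$ and $T_B$ of $T$ spanning $A$ and $B$. The key claim is that $T_A$ and $T_B$ are vertex-disjoint: a shared (necessarily internal) vertex $w$ would lie on a path $\pi_{ij}$ between two leaves $i,j\in A$ and on a path $\pi_{kl}$ between two leaves $k,l\in B$, so $\pi_{ij}$ and $\pi_{kl}$ would meet at $w$, no edge could separate $\{i,j\}$ from $\{k,l\}$, and $(i,j\mid k,l)$ would fail to lie in $X_C$, contradicting compatibility. Once vertex-disjointness is in hand, removing the first edge $e$ on the path from $T_A$ to $T_B$ yields precisely the split $A\mid B$, because every leaf lies in $T_A\cup T_B$, so all of $A$ lands on one side and all of $B$ on the other; since $|A|,|B|\ge 2$, this edge $e$ is internal and its cut is $(A\mid B)$, whence $(A\mid B)\in C_T=C$. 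I expect the only genuinely delicate points to be verifying that a shared vertex of $T_A$ and $T_B$ is internal and lies on the two required paths, and that the endpoints of $e$ are non-leaves so that $e$ is an internal edge.
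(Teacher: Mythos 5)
Your proposal is correct, and the three easy inclusions ($X_{C_X}\subseteq X$ via compatibility, $X\subseteq X_{C_X}$ via Lemma~\ref{lem:partial}, and $C\subseteq C_{X_C}$ by definition) coincide with the paper's argument. For the crux, $C_{X_C}\subseteq C$, you take a genuinely different route. The paper stays entirely inside the cut/crossing axioms: it fixes $a\in A$, $b\in B$, uses Axiom~(C) to totally order the chain $\alpha$ of clusters containing $a$ but not $b$, shows the top of that chain contains $A$, lets $A''$ be the least element of $\alpha$ containing $A$, and kills a hypothetical $c\in A''\cap B$ by passing to the refined chain $\alpha'$ of clusters avoiding $b$ and $c$ --- a purely order-theoretic contradiction. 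You instead realize $C$ as $C_T$ with $T=G_{C,c}$ (legitimate, since the lemma $C=C_{G_{C,c}}$ precedes this theorem and does not depend on it) and argue geometrically: the Steiner subtrees $T_A$ and $T_B$ must be vertex-disjoint, since a shared vertex lies on a path between two distinct leaves of $A$ and on one between two distinct leaves of $B$, so no edge could separate that quartet, contradicting compatibility; any edge on the path joining $T_A$ to $T_B$ then induces exactly the split $(A\mid B)$, and it is internal because $|A|,|B|\ge 2$. The "delicate points" you flag all check out (a shared vertex cannot be a leaf since leaves of $T_B$ lie in $B$, and every vertex of a minimal spanning subtree lies on a path between two of its terminals). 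The trade-off: the paper's proof is self-contained within the crossing--cut layer and would survive even if the tree constructions were reorganized, while yours is shorter to visualize and makes the reason for the global conclusion transparent, at the cost of importing the entire Trees-and-Cuts machinery as a prerequisite.
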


\begin{proof}
Let $X\in {\cal X}_N$ and $i,j,k,l\in N$ pairwise distinct. If $(i,j\mid k,l)$ is in $X$, then Lemma~\ref{lem:partial}
implies that there is a cut $(A\mid B)$ in $C_X$ such that $i,j\in A$ and $k,l\in B$. Therefore $(i,j,k,l)$ is also in $X_{C_X}$.
Conversely, if $(i,j\mid k,l)$ is in $X_{C_X}$, then there is a cut $(A\mid B)$ in $C_X$ such that $i,j\in A$ and $k,l\in B$.
Since $(A\mid B)$ is compatible with $X$, it follows that $(i,j\mid k,l)$ is in $X$.

Let $C\in {\cal C}_N$ be a phylogenetic set of cuts. Let $(A\mid B)$ be a cut. If $(A\mid B)$ is in $C$, then all quadruples $(i,j\mid k,l)$ with
$i,j\in A$ and $k,l\in B$ are in $X_C$. Hence $(A\mid B)$ is also in $C_{X_C}$. Conversely, assume that $(A\mid B)$ is in $C_{X_C}$.
We pick elements $a\in A$ and $b\in B$. Let $\alpha$ be the set of clusters of $C$ that contain $a$ but not $b$,
and let $\beta$ be the set of clusters of $C$ that contain $b$ but not $a$.
By Axiom~(C), both sets $\alpha$ and $\beta$ are totally ordered by set inclusion.
For any $i\in A\setminus\{a\}$ and $j\in B\setminus\{b\}$, the quadruple $(a,i\mid b,j)$ is in $X_C$ because $(A\mid B)$ is compatible with $X_C$. 
Then, there must exist a cut $(A'\mid B')\in C$ such that $a,i\in A'$ and $b,j\in B'$. Consequently, we have a cluster $A'\in\alpha$
for every element $i\in A\setminus\{a\}$, and therefore the largest cluster of $\alpha$ is a superset of $A$. Similarly, we can
show that the largest cluster of $\beta$ is a superset of $B$.

Let $A''$ be the smallest cluster of $\alpha$ that is still a superset of $A$.
We claim that $A''=A$;
if this claim is true, then $(A\mid B)$ would be in $C$, which would finish the proof of the converse and of the whole theorem.

To prove the claim, we assume, for the sake of contradiction, that there is an element $c\in A''\setminus A=A''\cap B$. Let $\alpha'$ be the set
of clusters of $C$ that contain $a$ but neither $b$ nor $c$. This set is also totally ordered by set inclusion. 
For any choice of elements
$k\in A\setminus\{a\}$ and $l\in B\setminus\{b\}$, the quadruple $(a,k\mid b,l)$ is in $X_C$. Hence there exists a cut
in $(A'''\mid B''')$ of $C$ such that $a,k\in A'''$ and $b,l\in B'''$. In particular, $A'''$ is in $\alpha'$. Since we can vary
$k$, it follows that the largest cluster of $\alpha'$ is a superset of $A$. Hence there is a superset of $A$ in $\alpha$ that does not
contain $c$, which is a contradiction to the fact that the smallest cluster of $\alpha$ that contains $A$, namely $A''$, does
contain $c$.
\end{proof}

\subsection{Partitions and Equivalences}

In order to prepare for the conversion between partitions and equivalences, we prove a result which could be considered as a kind of converse
of Lemma~\label{lem:tree}. Let us say that a partition {\em separates} a triple $\{a,b,c\}\in {N\choose 3}$ if and only if $a$, $b$, and $c$
are in three pairwise distinct parts of the partition. 

\begin{theorem} \label{thm:pisp}
Let $P$ be a collection of partitions of $N$ satisfying (P1) such that for every triple in ${N\choose 3}$, there is a unique partition separating it.
Then $P$ is phylogenetic.
\end{theorem}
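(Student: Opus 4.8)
The plan is to verify the three remaining axioms (P2), (P3), (P4) directly from (P1) together with the unique-separation hypothesis. Throughout I will use the hypothesis in its contrapositive form: since every triple has a \emph{unique} separating partition, two distinct partitions of $P$ can never separate a common triple. Note first that $P\neq\emptyset$, since $|N|\ge 3$ gives a triple, and that triple must be separated by some partition.

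For (P3), suppose a set $A$ were a part of two distinct partitions $p\neq q$. Both treat $A$ as a single block, so $p$ and $q$ can differ only in how they partition $N\setminus A$; call the induced partitions $\pi_p$ and $\pi_q$ of $N\setminus A$. By (P1) each of $\pi_p,\pi_q$ has at least two blocks, and $\pi_p\neq\pi_q$. The key elementary fact is that two distinct partitions of a set, each with at least two blocks, always admit a pair of elements lying in different blocks of \emph{both}; I would prove this by a short argument ruling out the only obstruction, namely one of the partitions being a single block. Picking such a pair $y,z\in N\setminus A$ together with any $a\in A$, the triple $\{a,y,z\}$ is then separated by both $p$ and $q$, contradicting uniqueness.

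The technical heart is a \emph{non-crossing lemma}: for any part $A$ of a partition $p$ and any part $A'$ of a partition $q$, at least one of $A\cap A'$, $A\setminus A'$, $A'\setminus A$, $N\setminus(A\cup A')$ is empty. (This is the partition analogue of Axiom~(C), and it is exactly what fails when two clusters ``cross''.) Since parts of a common partition are equal or disjoint, I may assume $p\neq q$ and suppose a crossing with witnesses $w_1\in A\cap A'$, $w_2\in A\setminus A'$, $w_3\in A'\setminus A$, $w_4\in N\setminus(A\cup A')$; the goal is to produce a triple separated by both $p$ and $q$. If $w_3,w_4$ lie in different parts of $p$ \emph{and} $w_2,w_4$ lie in different parts of $q$, the triple $\{w_2,w_3,w_4\}$ already works. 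The remaining cases are handled by invoking (P1) to choose auxiliary elements in a third part of $p$ and of $q$, and then checking that one of a handful of triples built from the $w_k$ and these auxiliary elements is forced to be separated by both partitions; each such check reduces to the impossibility of an element lying in two disjoint parts at once. This lemma is where essentially all of the combinatorial work lives, and I expect it to be the main obstacle.

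Granting the non-crossing lemma, (P2) is quick: let $A$ be an inclusion-minimal part containing a fixed $i$, and suppose $|A|\ge 2$, with $i'\in A\setminus\{i\}$ and some $c\notin A$. The separator $q$ of $\{i,i',c\}$ has a part $A_q\ni i$ with $i',c\notin A_q$; applying non-crossing to $A$ and $A_q$, the regions $A\cap A_q$ and $A\setminus A_q$ are non-empty (they contain $i$ and $i'$) and $N\setminus(A\cup A_q)$ is non-empty (it contains $c$), so $A_q\setminus A=\emptyset$, forcing $A_q\subsetneq A$ — a smaller part containing $i$, contradicting minimality. Hence $A=\{i\}$. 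Finally, for (P4), let $A$ be a part with $|A|\ge 2$; non-crossing shows that the parts contained in $A$ form a laminar family (two such are nested or disjoint, since their common complement contains the non-empty $N\setminus A$), so $A$ splits into maximal proper subparts and one may choose $a_1,a_2\in A$ for which $A$ is the \emph{smallest} part containing both. For $b\in N\setminus A$ I would then analyse the separator $q$ of $\{a_1,a_2,b\}$: non-crossing pins the parts of $a_1,a_2$ inside $A$ and forces the part $B\ni b$ to equal $N\setminus A$, yielding the complementary partition. Equivalently, non-crossing says precisely that the cuts $(D\mid N\setminus D)$ arising from parts $D$ of cardinality at least two satisfy Axiom~(C), so they form a phylogenetic set of cuts, and transporting through the established bijection ${\cal C}_N\cong{\cal T}_N$ and matching parts also delivers (P4). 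I regard locating the exact complementary part as the most delicate point after the non-crossing lemma itself.
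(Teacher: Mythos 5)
Your architecture matches the paper's: the ``non-crossing lemma'' you isolate is exactly the paper's Remark~\ref{rem:C} (any two parts are nested, disjoint, or co-cover $N$), which the paper derives from Lemma~\ref{lem:unique_pair}, and your derivations of (P2) and (P3) from it are correct --- your (P3) argument, restricting both partitions to $N\setminus A$ and finding a pair separated by both restrictions, is in fact a clean alternative to the paper's. But the two places you yourself flag as hard are genuine gaps, not deferred routine work. First, the non-crossing lemma is only sketched, and the sketch points in an unpromising direction: among the four witnesses $w_1\in A\cap A'$, $w_2\in A\setminus A'$, $w_3\in A'\setminus A$, $w_4\in N\setminus(A\cup A')$, every triple containing $w_1$ together with another witness from $A$ (resp.\ $A'$) is automatically \emph{not} separated by $p$ (resp.\ $q$), so the only usable triple is $\{w_2,w_3,w_4\}$ and you are immediately forced into the case $w_3\sim_p w_4$ or $w_2\sim_q w_4$; chasing auxiliary elements from third parts then produces further crossing configurations rather than a contradiction. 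The idea that actually closes this is the paper's Lemma~\ref{lem:unique_pair}: pick representatives of three distinct parts of each partition; uniqueness of the separator forces two of the three representatives of $p$ into a common part $A_2$ of $q$ and symmetrically yields $A_1\in p$; then $A_1\cup A_2=N$, since any $x$ outside the union would give a triple $\{x,a_1,a_2\}$ separated by both. Non-crossing falls out of that statement in one line.

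Second, your (P4) step ``non-crossing forces the part $B\ni b$ to equal $N\setminus A$'' is false as stated. Applying non-crossing to $B$ and $A$ (with $b\in B\setminus A$ and $a_1\in A\setminus B$ as witnesses) yields only the dichotomy $B\subseteq N\setminus A$ or $B\supseteq N\setminus A$, and both strict inclusions are a priori possible; excluding them is where the real work of (P4) lies. The paper does this with an extremal argument: among all parts containing $N\setminus A$ and avoiding a fixed $b\in A$ (a family shown to be nonempty and totally ordered), take the minimal one $C$; if $C\ne N\setminus A$, pick $d\in C\cap A$ and separate $\{d,b,e\}$ to manufacture a member of the family not containing $d$, contradicting minimality of $C$. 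Your fallback via the bijection between ${\cal C}_N$ and ${\cal T}_N$ does not repair this: knowing that the cuts $(D\mid N\setminus D)$ coming from large parts satisfy Axiom~(C) gives you a tree, but identifying the partitions of $P$ with those of $P_T$ --- in particular, showing that $N\setminus A$ actually occurs as a part of some partition of $P$ rather than merely as a cluster of that tree --- is precisely the content of (P4), so this route is circular.
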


\begin{remark} \label{rem:converse}
The converse is also true: if a collection of partition is phylogenetic, then it is equal to $P_T$ for some phylogenetic tree, 
by Theorem~\ref{thm:hh}. By Lemma~\ref{lem:unique_vertex}, it follows that every triple is separated by a unique partition in $P_T$.
\end{remark}

In order to prove Theorem~\ref{thm:pisp}, we need the following proposition.

\begin{lemma} \label{lem:unique_pair}
Let $P$ be a collection of partitions of $N$ satisfying Axiom (P1) such that for every triple in ${N\choose 3}$, there is a unique partition separating it.
Let $p_1,p_2$ be two distinct partitions. Then there is a unique pair of parts $A_1\in p_1$ and $A_2\in p_2$
such that $A_1\cup A_2=N$. 

Moreover, if $B_1\in p_1$ is any part of $p_1$ distinct from $A_1$, and $B_2\in p_2$ is any part of $p_2$ distinct from $A_2$,
then $B_1\subset A_2$, $B_2\subset A_1$, and $B_1\cap B_2 = \emptyset$.
\end{lemma}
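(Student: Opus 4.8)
The plan is to analyze the two partitions simultaneously through their common refinement, organized as a grid whose rows are the parts of $p_1$ and whose columns are the parts of $p_2$. Call a cell $(A,C)$ with $A\in p_1$ and $C\in p_2$ \emph{occupied} if $A\cap C\neq\emptyset$. Every element of $N$ lies in exactly one occupied cell, and the key reformulation I will use repeatedly is that $A_1\cup A_2=N$ holds precisely when every occupied cell lies in row $A_1$ or in column $A_2$ (a ``cross''). The one hypothesis to exploit is that no triple is separated by two partitions: if occupied cells $(R_1,C_1),(R_2,C_2),(R_3,C_3)$ had pairwise distinct rows and pairwise distinct columns, then picking $x_i\in R_i\cap C_i$ would give a triple $\{x_1,x_2,x_3\}$ separated by both $p_1$ and $p_2$, contradicting uniqueness. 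So the occupied cells contain no three entries in ``general position''.

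First I would prove existence. Regarding the occupied cells as the edges of a bipartite graph between rows and columns, the observation above says this graph has no matching of size three, hence maximum matching at most two. By König's theorem the occupied cells can be covered by at most two lines (rows or columns). A cover by two rows would force every element of $N$ into one of two parts of $p_1$, contradicting Axiom~(P1), which supplies a third, nonempty part; a cover by two columns, or by a single line, is excluded the same way. Therefore a minimum cover consists of exactly one row $A_1$ and one column $A_2$, and by the cross reformulation $A_1\cup A_2=N$. This yields the required pair.

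The \textbf{moreover} part is then immediate. If $B_1\in p_1$ with $B_1\neq A_1$ and $B_2\in p_2$ with $B_2\neq A_2$, then $B_1\cap A_1=\emptyset$ gives $B_1\subseteq N\setminus A_1\subseteq A_2$, and symmetrically $B_2\subseteq A_1$. Finally, the cell $(B_1,B_2)$ lies in neither row $A_1$ nor column $A_2$, so by the cross structure it is unoccupied, i.e. $B_1\cap B_2=\emptyset$.

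Uniqueness is where the real case-work sits, and I expect it to be the main obstacle. Suppose $(A_1',A_2')$ also satisfies $A_1'\cup A_2'=N$. If $A_1=A_1'$ but $A_2\neq A_2'$, then an occupied cell of the nonempty column $A_2$ must, in the primed cross, lie in row $A_1$ (its column being distinct from $A_2'$), forcing $A_2\subseteq A_1$ and hence $A_1=N$, against~(P1); so $A_2=A_2'$. If instead $A_1\neq A_1'$, I would first note $A_2\neq A_2'$ (otherwise $N\setminus A_2\subseteq A_1\cap A_1'=\emptyset$ forces $A_2=N$), and then chase occupied cells of rows $A_1,A_1'$ and columns $A_2,A_2'$ through both cross structures to obtain $A_1\subseteq A_2'\subseteq A_1$ and $A_1'\subseteq A_2\subseteq A_1'$, so that $A_1=A_2'$ and $A_1'=A_2$. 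But then $A_1$ and $A_1'$ are distinct parts of $p_1$ with $A_1\cup A_1'=A_1\cup A_2=N$, leaving no room for a nonempty third part and contradicting Axiom~(P1). Hence the pair $(A_1,A_2)$ is unique.
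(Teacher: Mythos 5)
Your proof is correct, and it takes a genuinely different route from the paper's. The paper argues directly with elements: it picks representatives $a_i,b_i,c_i$ from three distinct parts of each $p_i$, uses uniqueness of the separating partition to force two of $a_1,b_1,c_1$ into a common part $A_2$ of $p_2$ (and symmetrically to get $A_1$), and then proves $A_1\cup A_2=N$ by exhibiting a doubly separated triple $\{x,a_1,a_2\}$ from any hypothetical $x\notin A_1\cup A_2$. You instead encode the common refinement of $p_1,p_2$ as a bipartite graph of occupied cells, translate ``no triple is separated twice'' into ``no matching of size three,'' and invoke K\"onig's theorem to get a two-line cover, which Axiom~(P1) then forces to be one row plus one column. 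The paper's approach buys brevity and elementarity --- a three-element pigeonhole with no graph-theoretic machinery. Your approach buys a cleaner global picture (the ``cross'' reformulation makes the moreover clause, and the laminarity recorded in Remark~\ref{rem:C}, transparent) and, notably, an explicit treatment of uniqueness: the paper's proof only constructs the pair and verifies the moreover clause, leaving uniqueness implicit (it does follow from the moreover clause, but this is not spelled out there), whereas your case analysis of a second pair $(A_1',A_2')$ is complete and correct. One small point worth making explicit in your write-up: the elements $x_i\in R_i\cap C_i$ extracted from a size-three matching are automatically pairwise distinct because their rows are, so $\{x_1,x_2,x_3\}$ really is a triple.
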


\begin{proof}
For $i=1,2$, 
let $a_i,b_i,c_i\in N$ be elements from three different parts of $p_i$. The partition separating $\{a_i,b_i,c_i\}$ is unique,
therefore at least two of $a_1,b_1,c_1$ must be in the same part of $p_2$; without loss of generality, we may assume 
that $a_1$ and $b_1$ are in the same part.
We choose $A_2$ to be this part. Similarly, we may assume that $a_2$ and $b_2$ are in the same part of $p_1$, and we choose $A_1$ to be 
this part.

Suppose that $A_1\cup A_2\subsetneq N$. Take any $x\in N\setminus (A_1\cup A_2)$. Assume, without loss of generality, that $x$ is not in the same part 
with $a_1$ in $p_1$ --- otherwise, we exchange $a_1$ and $b_1$. Analogously we may assume that $x$ is not in the same part with 
$a_2$ in $p_2$. 
Then we see that the triple $\{x,a_1,a_2\}$ is separated by
both partitions $p_1$ and $p_2$. We have our contradiction. It follows that $A_1\cup A_2=N$.

The second statement is a consequence of $A_1\cup A_2=N$ and the fact that the any part of $p_i$ different from $A_i$ is a subset of 
$N\setminus A_i$, for $i=1,2$.
\end{proof}

\begin{remark} \label{rem:C}
As a consequence of Lemma~\ref{lem:unique_pair}, the set of parts of any set of $P$ fulfills an axiom that is similar to the cluster axiom (C): any two parts,
whether they are in the same partition or not, are either contained one in the other, or disjoint, or their union is $N$. 
\end{remark}

\begin{proof}[Proof of Theorem~\ref{thm:pisp}] 
We already know that $P$ satisfies Axiom~(P1), by assumption.

Axiom (P2): let $a\in N$ be arbitrary. Let $b\in B$ such that $b\ne a$. By Remark~\ref{rem:C}, the set of all parts of any partition that contain $a$
but not $b$ is totally ordered by inclusion. Let $A$ be the smallest such part. We claim that $A=\{a\}$. Assume, for the sake of contradiction,
that $A$ contains a second element $c\ne a$. Then the triple $\{a,b,c\}$ cannot be separated by any partition, contradicting the assumption.

Axiom (P3): for the sake of contradiction, assume that $A$ is a part with $|A|\geq 2$ that shows up in two distinct partitions $p_1,p_2\in P$.
By Lemma \ref{lem:unique_pair}, there exists $B\in p_2$ such that $A\cup B=N$. This implies that $|p_2|=2$,
which violates Axiom (P1).

Axiom (P4): Suppose, for the sake of contradiction, that (P4) is not fulfilled.
Let $A$ be a part of some partition of $p$ of $P$, with cardinality at least 2, such that $N\setminus A$ is not a part 
of any partition.
Let $a$ and $b$ be two distinct elements of $A$ .
Let ${\cal X}$ be the set of all parts that are supersets of $N\setminus A$ and do not contain $b$.

To show that ${\cal X}$ is not empty, we pick an element $e\in N\setminus A$.
There must be a partition $q$ separating the triple $\{a,b,e\}$. By Lemma~\ref{lem:unique_pair}, there are parts $F\in p$
and $E\in q$ such that $E\cup F=N$. The part $A\in p$ has non-empty intersection with at least two parts of $q$, namely with the part containing $a$
and with the part containing $b$. Neither part can be a superset of $A$. Then, by Remark~\ref{rem:C}, both are subsets of $A$.
This implies $F=A$ and therefore $E\cup A=N$. Then we get $e\in E$ and $b\not\in E$. So, $E\in{\cal X}$.

By Remark~\ref{rem:C}, ${\cal X}$ is totally ordered by inclusion.  
Let $C$ be the smallest element of ${\cal X}$. Since $N\setminus A$ is not a part, 
there exists an element $d$ in $C\setminus (N\setminus A)=A\cap C$. 
Now we repeat the argument that we used above with the triple $\{d,b,e\}$.
Note that $d$ and $b$ are two distinct elements in $A$, since $d$ is in some part
of $\cal{X}$ while $b$ is not. 
Hence there must be a partition $q'$ separating the triple $\{d,b,e\}$. By Lemma~\ref{lem:unique_pair},
there are parts $F'\in p$ and $E'\in q'$ such that $F'\cup E'=N$. Then, with the analogous reasoning, 
we obtain that the parts containing $d$ and the part containing $b$ in $q'$ are both subsets of 
$A$. Hence we have that $F'=A$ and $E'\cup A=N$. Also we observe that $b\notin E'$. Therefore,
$E'\in \cal{X}$ and $d\notin E'$.

Since ${\cal X}$ is totally ordered by inclusion and $C$ is the smallest element of $\cal{X}$,
we have that $C\subset E'$. This implies $d\in E'$, which is a contradiction.

\end{proof}

\begin{remark}
It is a fun question to ask what happens to if we replace ``triples'' by ``quadruples'', ``quintuples'' etc. The second author conjectures
that there are almost no collections of partitions such that each partition has four parts, and every quadruple is separated by a unique partition.
More precisely, any such collection has only a single partition, where every part is a singleton.
\end{remark}

In order to convert triples to partitions, we also need one more definition.

\begin{definition}
For any partition $p$ of $N$, let $S_p$ be the set of all triples that are separated by $p$.

For any set $S$ of triples, let $G_S$ be the graph with vertex set $N$, and an edge between $i,j\in N$ if and only if
no triples of $S$ contain
both $i$ and $j$. Let $p_S$ be the partition of $N$ defined by the connected components of $G_S$.
\end{definition}

\begin{example}
 In Figure \ref{fig:partition_graph} we see the graphs $G_S$ when $S$ is one of the three equivalence classes of triples
 in Example \ref{ex:equivsmall}.
 \begin{figure}
   \begin{center}
    \begin{tikzpicture}[scale=0.8]
       \node[lnode] at (0,0) (1) {1};
       \node[lnode] at (1,0.8) (2) {2};
       \node[lnode] at (2,0) (3) {3};
       \node[lnode] at (0.5,-1) (4) {4};
       \node[lnode] at (1.5,-1) (5) {5};
       \draw[bedge] (3)edge(4) (3)edge(5) (4)edge(5);
    \end{tikzpicture}
    \hspace{2cm}
    \begin{tikzpicture}[scale=0.8]
       \node[lnode] at (0,0) (1) {1};
       \node[lnode] at (1,0.8) (2) {2};
       \node[lnode] at (2,0) (3) {3};
       \node[lnode] at (0.5,-1) (4) {4};
       \node[lnode] at (1.5,-1) (5) {5};
       \draw[bedge] (1)edge(2) (1)edge(3) (2)edge(3);
    \end{tikzpicture}
    \hspace{2cm}
    \begin{tikzpicture}[scale=0.8]
       \node[lnode] at (0,0) (1) {1};
       \node[lnode] at (1,0.8) (2) {2};
       \node[lnode] at (2,0) (3) {3};
       \node[lnode] at (0.5,-1) (4) {4};
       \node[lnode] at (1.5,-1) (5) {5};
       \draw[bedge] (1)edge(2) (4)edge(5);
    \end{tikzpicture}

\end{center}
\caption{These are the graph $G_S$ defined by the three equivalence classes in Example \ref{ex:equivsmall}. 
	Each of the graphs is a disconnected union of complete graphs, in accordance to Lemma~\ref{lem:complete_graph}.}
\label{fig:partition_graph}
\end{figure}
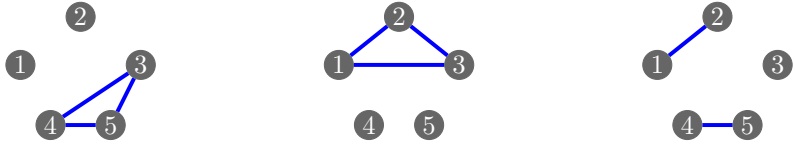
\end{example}

\begin{example}
Let $p=p_a$ be as in Example~\ref{ex:parts}. Then $S_{p_a}$ is exactly the equivalence class of triples $E_a$ in Example~\ref{ex:tb}.
Moreover, $p_{E_a}$ is the partition $p_a$.
\end{example}

\begin{lemma}\label{lem:complete_graph}
Let $S$ be a diverse set of triples. Then $G_S$ is a disconnected union of complete graphs.
 
\end{lemma}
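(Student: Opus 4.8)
The plan is to reduce the statement to a statement about the adjacency relation of $G_S$. A finite simple graph is a disjoint union of complete graphs precisely when it contains no induced path on three vertices, i.e.\ when adjacency is ``transitive'' in the sense that there are no pairwise distinct vertices $i,j,k$ with $ij,jk\in E(G_S)$ but $ik\notin E(G_S)$. Equivalently, the relation ``$i=j$ or $i$ adjacent to $j$'' is then reflexive, symmetric, and transitive, so its classes are exactly the connected components, and each component is a clique. So I would first state this elementary graph-theoretic equivalence (a shortest $a$--$c$ path of length $\ge 2$ would produce an induced $P_3$), and then prove that $G_S$ has no induced $P_3$.

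To do that I would unwind the edge relation and argue the contrapositive of transitivity. Recall that $ij\in E(G_S)$ means that \emph{no} triple of $S$ contains both $i$ and $j$. So suppose $i,j,k$ are pairwise distinct with $ik\notin E(G_S)$; I must show $ij\notin E(G_S)$ or $jk\notin E(G_S)$. From $ik\notin E(G_S)$ there is a triple $\{i,k,m\}\in S$ with $m\notin\{i,k\}$. If $m=j$, then this triple is $\{i,j,k\}$, which already contains the pair $\{i,j\}$, so $ij\notin E(G_S)$ and we are done. Otherwise $i,j,k,m$ are four distinct elements, and I apply Axiom~(D1) to the triple $\{i,k,m\}$ with the extra element $l=j$: the set $S$ then contains one of $\{i,k,j\}$, $\{i,m,j\}$, or $\{k,m,j\}$. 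The first two triples contain the pair $\{i,j\}$, forcing $ij\notin E(G_S)$; the third contains the pair $\{j,k\}$, forcing $jk\notin E(G_S)$. In every case at least one of $ij$, $jk$ fails to be an edge, which is exactly the contrapositive, so $G_S$ has no induced $P_3$ and is a disjoint union of complete graphs.

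The main (and essentially only) obstacle is conceptual rather than technical: recognizing that the desired global structure is equivalent to transitivity of adjacency, after which the proof is a single clean application of~(D1). The one bookkeeping subtlety is the degenerate case $m=j$, which must be split off before invoking~(D1), since~(D1) implicitly presupposes four genuinely distinct labels; handling it first, as above, removes the difficulty. It is worth noting that this argument uses only Axiom~(D1): neither Axiom~(D2) nor the nonemptiness of $S$ enters, so the conclusion holds for any set of triples satisfying~(D1). This also shows that the partition $p_S$ of Definition preceding the lemma is well defined, since the connected components of $G_S$ are precisely the cliques just exhibited.
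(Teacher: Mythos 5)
Your proof is correct and follows essentially the same route as the paper: both arguments reduce the claim to the absence of an induced path on three vertices and then apply Axiom~(D1) to a triple $\{i,k,m\}\in S$ witnessing the missing edge, with the extra element $j$, to derive a contradiction. Your version is slightly more careful in explicitly splitting off the degenerate case $m=j$ (which the paper handles implicitly by asserting $l\in N\setminus\{i,j,k\}$), and your observation that only (D1) is needed is accurate.
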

\begin{proof}
 Let $i,j,k\in N$ be three distinct vertices of $G_S$. Suppose
that $\{i,j\}$, $\{j,k\}$ are edges of $G_S$, but $\{i,k\}$ is not
an edge. Then there exists $l\in N\setminus \{i,j,k\}$ such that $\{i,k,l\}\in S$. By Axiom (D1), 
one of the triples $\{i,j,l\}$, $\{j,k,l\}$, or
$\{i,j,k\}$ is in $S$. This violates the fact that $\{i,j\}$ and $\{j,k\}$ are edges. This shows that
any two vertices in the same connected
component of $G_U$ are connected by an edge; the graph is a disconnected union of complete graphs.
\end{proof}

\begin{lemma} \label{lem:div2part}
For any partition $p$ with at least three parts, we have that $S_p$ is diverse and $p_{S_p}=p$.

For any diverse set $S$ of triples, the partition $p_S$ has at least three parts, and we have $S_{p_S}=S$.
\end{lemma}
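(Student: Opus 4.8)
The plan is to prove both statements by translating separation and membership into the combinatorics of the graphs $G_{S_p}$ and $G_S$. For the first statement, assume $p$ has at least three parts. Nonemptiness of $S_p$ is immediate: choosing one element from each of three distinct parts gives a separated triple. For Axiom~(D1), given a separated triple $\{i,j,k\}$ and any $l\in N$, note that $l$ can share a part with at most one of $i,j,k$ (otherwise two of them would lie in a common part), so $l$ lies in a part distinct from at least two of them, say $i$ and $j$; then $\{i,j,l\}$ is again separated, so it lies in $S_p$. Axiom~(D2) is even more direct, since separation is a pairwise condition: the three hypothesized triples force $x,y$, then $y,z$, then $x,z$ to lie in pairwise distinct parts, so $\{x,y,z\}$ is separated and hence in $S_p$. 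To get $p_{S_p}=p$, I compute the edges of $G_{S_p}$ directly: two elements $i,j$ lie in a common triple of $S_p$ if and only if they lie in distinct parts of $p$ and a third part exists to host some $k$ — which it does, precisely because $p$ has at least three parts. Thus $i,j$ are adjacent in $G_{S_p}$ if and only if they lie in the same part of $p$, so the connected components of $G_{S_p}$ are exactly the parts of $p$, giving $p_{S_p}=p$.

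For the second statement, let $S$ be diverse. The central tool is Lemma~\ref{lem:complete_graph}, which says $G_S$ is a disjoint union of complete graphs; consequently two distinct elements $i,j\in N$ lie in the same part of $p_S$ if and only if they are adjacent in $G_S$, i.e. if and only if no triple of $S$ contains both. Equivalently, $i$ and $j$ lie in distinct parts of $p_S$ if and only if some triple of $S$ contains both of them. Using this dictionary I first establish $S\subseteq S_{p_S}$: if $\{i,j,k\}\in S$, then each of the pairs $\{i,j\}$, $\{j,k\}$, $\{i,k\}$ is contained in a triple of $S$ (namely $\{i,j,k\}$ itself), so $i,j,k$ occupy three distinct parts of $p_S$ and the triple is separated. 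Since $S$ is nonempty by diversity, this also shows that $p_S$ has at least three parts, settling that part of the claim at once.

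The reverse inclusion $S_{p_S}\subseteq S$ is the heart of the lemma and the only place Axiom~(D2) is used, so this is where I expect the real work. Given a triple $\{i,j,k\}$ separated by $p_S$, the three pairs lie in distinct parts, so the dictionary supplies triples $\{i,j,a\}$, $\{j,k,b\}$, $\{i,k,c\}$ in $S$. If some witness already coincides with $\{i,j,k\}$ (for instance $a=k$), the conclusion is immediate; otherwise $a,b,c\notin\{i,j,k\}$ and I invoke Axiom~(D2) with $(x,y,z)=(i,j,k)$ to conclude $\{i,j,k\}\in S$. The main obstacle is precisely this last step: I must match the three witness triples against the exact form of~(D2) and handle the possible coincidences among the witnesses carefully, noting that~(D2) is stated for auxiliary elements $a,b,c$ that need not be distinct from one another. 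Once that bookkeeping is carried out, combining $S\subseteq S_{p_S}$ with $S_{p_S}\subseteq S$ yields $S_{p_S}=S$ and completes the proof.
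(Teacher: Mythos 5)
Your proposal is correct and follows essentially the same route as the paper: the same direct verification of (D1) and (D2) for $S_p$, the same use of Lemma~\ref{lem:complete_graph} to identify parts of $p_S$ with components of $G_S$, and the same application of Axiom~(D2) to the three witness triples for the inclusion $S_{p_S}\subseteq S$. The ``bookkeeping'' you flag at the end is in fact vacuous, since (D2) places no distinctness requirements on $a,b,c$ and applies verbatim even when a witness coincides with the third vertex.
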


\begin{proof}
Let $p$ be a partition with at least three parts. Then we know that $S_p$ is non-empty.
Assume that the triple $\{i,j,k\}$ is in $S_p$, which means that $i$, $j$, and $k$ are in three
distinct parts. A fourth element $l\in N$ can at most be in one of this three parts, hence $p$ separates $l$ 
and two other elements out of $i$, $j$, and $k$.
Therefore $S_p$ satisfies (D1). 

Let $a,b,c,x,y,z\in N$. Assume that $p$ separates the triples $\{ a,x,y\}$, $\{ b,y,z\}$, and $\{ c,x,z\}$. 
Then $x$, $y$, and $z$ are in pairwise
distinct parts, so $p$ also separates $\{x,y,z\}$. Therefore $S_p$ satisfies (D2), hence $S_p$ is diverse.
Moreover,
 $i,j$ are in the same part of $p$ if and only if no triples in $S_p$
contain both $i$ and $j$ if and only if $i$ and $j$ are in the same component of $G_{S_p}$ if and only
if $i$ and $j$ are in the same part of $p_{S_p}$. Hence $p_{S_p}=p$.

Let $S$ be any diverse set of triples. 
%
%
%
If a triple $\{i,j,k\}$ is in $S$, then
none of $\{i,j\}$, $\{i,k\}$ or $\{j,k\}$ is an edge of $G_S$. By Lemma \ref{lem:complete_graph},
we obtain that $i,j,k$
are in pairwise distinct components in $G_S$. Hence, $i,j,k$ are in pairwise
distinct parts of $p_S$. Therefore, $\{i,j,k\}\in S_{p_S}$. For the other direction, let 
$\{i,j,k\}$ be any triple in $S_{p_S}$. This indicates that $i,j,k$ are in three 
pairwise distinct parts in $p_S$, i.e., $i,j,k$ are in three pairwise distinct components
of the graph $G_S$. Therefore, none of $\{i,j\}$, $\{i,k\}$, $\{j,k\}$ is an edge of $G_S$.
Hence, $S$ contains triples $\{i,j,a\}$, $\{i,k,b\}$ and $\{j,k,c\}$ for some $a,b,c\in N$.
By Axiom (D2), $\{i,j,k\}\in S$. Hence $S_{p_S}=S$.

\end{proof}

Now we define the conversion from equivalences on triples to collections of partitions. 
For any phylogenetic equivalence relation $E$ of triples in $N$, we define $P_E$ as the collection of all partitions
$p_U$ for any equivalence class $U$ of $E$. The function from ${\cal E}_N$ to $\overline{{\cal P}_N}$ that
maps $E$ to $P_E$ is denoted by $t_{EP}$.

\begin{lemma}
Let $E$ be a phylogenetic equivalence relation of triples in $N$. Then
$P_E$ is a phylogenetic set of partitions.
\end{lemma}

\begin{proof}
Every equivalence class $U$ contains at least one triple. This triple is separated by $p_U$, and it follows
that $p_U$ must have at least three parts. Therefore $P_E$ satisfies Axiom~(P1). 

Every triple $\tau=\{i,j,k\}\in {N\choose 3}$ is in a unique equivalence class $U$, and $p_U$ separates $\tau$
by Lemma~\ref{lem:div2part}. Moreover, if $V$ is any equivalence class such that $p_V$ separates $\tau$, 
then $\tau\in S_{p_V}=V$, and therefore $U=V$. This implies that every triple is separated by a unique
partition in $P_E$. By Theorem \ref{thm:pisp}, $P_E$ is phylogenetic.
\end{proof}

For any phylogenetic collection $P$ of partitions, each triple $\tau\in {N\choose 3}$ is 
separated by a unique partition in $P$, by Remark~\ref{rem:converse}.
We define $E_P$ as follows: Two triples $\tau_1$ and $\tau_2$ are equivalent if and only if the unique partition
separating $\tau_1$ is the same as the unique partition separating $\tau_2$. The function from ${\cal P}_N$ to
${\cal E}_N$ that maps $P$ to $E_P$ is denoted by $t_{PE}$.
It is straightforward to see that $t_{TE}=t_{PE}\circ t_{TP}$.

\begin{lemma}
 Let $P$ be a phylogenetic set of partitions of $N$. Then $E_P$ is a phylogenetic equivalence relation of triples in ${N\choose 3}$.
\end{lemma}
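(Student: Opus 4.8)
The plan is to verify that $E_P$ is genuinely an equivalence relation and then to check that each of its classes is diverse, which is precisely Axiom~(E0). The relation $E_P$ is defined by declaring two triples equivalent exactly when they are separated by the same partition of $P$. Since by Remark~\ref{rem:converse} every triple $\tau\in{N\choose 3}$ is separated by a unique partition of $P$, this relation is the kernel of the well-defined map sending each triple to its unique separating partition. Hence $E_P$ is automatically reflexive, symmetric, and transitive, and its equivalence classes are precisely the nonempty fibres of this map.

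Next I would identify these fibres explicitly. Fix a partition $p\in P$ and let $U$ be the class of all triples whose unique separating partition is $p$. I claim $U=S_p$, the set of all triples separated by $p$. Indeed, if $\tau\in U$ then $p$ separates $\tau$, so $\tau\in S_p$; conversely, if $\tau\in S_p$ then $p$ separates $\tau$, and by uniqueness $p$ is the \emph{unique} partition separating $\tau$, so $\tau\in U$. Thus every equivalence class of $E_P$ has the form $S_p$ for some $p\in P$. Moreover each $p\in P$ yields a nonempty class, because by Axiom~(P1) the partition $p$ has at least three parts, and selecting one element from each of three distinct parts produces a triple lying in $S_p$.

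It then remains to invoke Lemma~\ref{lem:div2part}. Since $P$ satisfies Axiom~(P1), every partition $p\in P$ has at least three parts, so that lemma guarantees $S_p$ is diverse. Consequently every equivalence class of $E_P$ is diverse, which is exactly Axiom~(E0), and therefore $E_P$ is phylogenetic.

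As for the difficulty, there is essentially no obstacle here: the substantive combinatorics has already been carried out in Lemma~\ref{lem:div2part} (diversity of $S_p$) and in Remark~\ref{rem:converse} (uniqueness of the separating partition). The only point that demands a moment of care is the identification of the classes of $E_P$ with the sets $S_p$, and this rests entirely on the uniqueness of the separating partition; once that identification is made, the conclusion follows immediately.
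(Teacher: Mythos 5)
Your proposal is correct and follows essentially the same route as the paper: identify each equivalence class of $E_P$ with $S_p$ for some $p\in P$ and apply Lemma~\ref{lem:div2part} to conclude diversity, hence Axiom~(E0). The paper's proof is just a one-line version of this; your explicit verification that the classes are exactly the sets $S_p$ (via uniqueness of the separating partition from Remark~\ref{rem:converse}) fills in the details the paper leaves implicit.
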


\begin{proof}
For any partition $p$ in $P$, the set of triples $S_p$ is diverse by Lemma~\ref{lem:div2part}; hence Axiom~(E0) is fulfilled.

\end{proof}

\begin{theorem} 
The two sets $\cal{P}_N$ and $\cal{E}_N$ are in bijection: function $t_{EP}:\cal{E}_N\to \cal{P}_N$, $E\mapsto P_E$
is the inverse of function $t_{PE}:\cal{P}_N\to \cal{E}_N$, $P\mapsto E_P$.
\end{theorem}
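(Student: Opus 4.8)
The plan is to show that the two maps $t_{EP}$ and $t_{PE}$ are mutually inverse by verifying both composites are the identity, and the work has essentially been done in the preceding lemmas; what remains is to assemble it. Since the previous two lemmas already establish that $t_{EP}$ maps into $\mathcal{P}_N$ and $t_{PE}$ maps into $\mathcal{E}_N$, I only need to check $t_{PE}\circ t_{EP}=\mathrm{id}_{\mathcal{E}_N}$ and $t_{EP}\circ t_{PE}=\mathrm{id}_{\mathcal{P}_N}$. The central tool throughout is Lemma~\ref{lem:div2part}, which gives the two ``roundtrip'' identities $p_{S_p}=p$ and $S_{p_S}=S$ relating a partition to its separated triples and a diverse set of triples to its induced partition.

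First I would verify $t_{EP}\circ t_{PE}=\mathrm{id}$ on $\mathcal{P}_N$. Starting from a phylogenetic collection $P$, the relation $E_P$ has equivalence classes that are exactly the sets $S_p$ for $p\in P$ (each triple being sorted by the unique partition separating it, which exists by Remark~\ref{rem:converse}). Applying $t_{EP}$ rebuilds the collection $\{p_U : U \text{ a class of } E_P\} = \{p_{S_p} : p\in P\}$, and by Lemma~\ref{lem:div2part} each $p_{S_p}=p$, so we recover $P$ exactly. I should note that distinct partitions of $P$ give distinct classes, so no collapsing occurs; this follows because $p\mapsto S_p$ is injective (again by $p_{S_p}=p$).

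Next I would verify $t_{PE}\circ t_{EP}=\mathrm{id}$ on $\mathcal{E}_N$. Starting from a phylogenetic equivalence relation $E$ with classes $\{U\}$, the collection $P_E=\{p_U\}$ is phylogenetic, so by Remark~\ref{rem:converse} each triple is separated by a unique partition in $P_E$. The induced equivalence $E_{P_E}$ declares two triples equivalent iff the same $p_U$ separates both. By Lemma~\ref{lem:div2part} we have $S_{p_U}=U$, so $p_U$ separates exactly the triples in $U$; hence $\tau$ and $\tau'$ are $E_{P_E}$-equivalent iff they lie in a common class $U$ of $E$, which is precisely the relation $E$. The one subtlety here is to confirm that the partitions $p_U$ for distinct classes $U$ are themselves distinct, so that the separating partition genuinely encodes the class; this is immediate since $S_{p_U}=U$ forces $p_U\neq p_{U'}$ when $U\neq U'$.

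The proof is therefore essentially bookkeeping: the real content is already contained in Lemma~\ref{lem:div2part} and Theorem~\ref{thm:pisp}, and the main (mild) obstacle is simply being careful that both maps are well-defined as bijections between classes and parts without multiplicity issues, i.e., that no two equivalence classes induce the same partition and no two partitions separate the same triples. Once the injectivity of $p\mapsto S_p$ (equivalently $U\mapsto p_U$) is observed, both composites reduce directly to the identities of Lemma~\ref{lem:div2part}, completing the bijection.
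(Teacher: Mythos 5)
Your proposal is correct and follows essentially the same route as the paper's own proof: both composites are reduced to the roundtrip identities $p_{S_p}=p$ and $S_{p_S}=S$ from Lemma~\ref{lem:div2part}, with Theorem~\ref{thm:pisp} and Remark~\ref{rem:converse} guaranteeing that the intermediate objects are phylogenetic. Your added check that $p\mapsto S_p$ and $U\mapsto p_U$ are injective (so no classes or partitions collapse) is a small point the paper leaves implicit, but the argument is otherwise identical.
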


\begin{proof}
For any $E\in \cal{E}_N$, any class $U$ of $E$ is diverse. Then, by Lemma \ref{lem:div2part}
we have that $S_{p_U}=U$. We see that $\{S_{p_U}\}_{U\in E}$ is exactly $E_{P_E}$ --- where the foot index $U\in E$
means that $U$ is a class of $E$. Hence, we have $E_{P_E}=E$.

For any $P\in \cal{P}_N$, each partition $p\in P$ has at least three parts. By Lemma~\ref{lem:div2part}, 
we know that $p_{S_p}=p$. Since $\{p_{S_p}\}_{p\in P}$ is exactly the partition collection $P_{E_P}$,
we obtain that $P_{E_P}=P$.

\end{proof}

In Figure~\ref{fig:diag2}, we display the diagram consisting of all conversion algorithms in this paper.

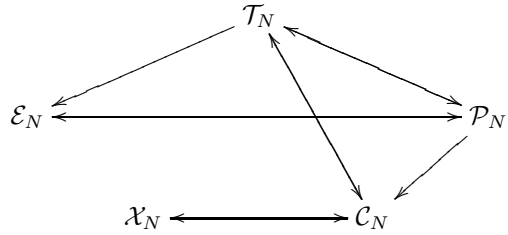
\begin{figure}
\[ \xymatrix{
  & & {\cal T}_N \ar[lld] \ar[rrd] \ar[rdd] & & \\
  {\cal E}_N \ar[rrrr] & & & & {\cal P}_N \ar[ld] \ar[ull] \ar[llll] \\ 
  & {\cal X}_N \ar[rr] & & {\cal C}_N \ar[ll] \ar[uul] &
} \]
\caption{This diagram shows all conversion maps between different types of structures that have been defined 
in this paper. We also have seen that the triangles are commutative.}
\label{fig:diag2}
\end{figure}

\section*{Acknowledgement}
This research was funded by the Austrian Science Fund (FWF): W1214-N15, project DK9, and by (FWF): P31338.

The authors thank Nicolas Allen Smoot for the inspiring discussion on the conversion from a phylogenetic
set of cuts to a phylogenetic tree.
The authors thank Antonio Jim\'enez-Pastor for the discussion on the conversion
 from a crossing relation to an equivalence relation on the triples.

 \newpage
\bibliographystyle{plain}
\bibliography{reps}

\end{document}